\documentclass[10pt]{amsart}
\usepackage{amsmath}
\usepackage{amssymb}
\usepackage{amsbsy}
\usepackage{amsfonts}

\headsep 30pt \headheight 20pt \textheight=23cm \textwidth=16.5cm
\topmargin=-1cm \oddsidemargin=-0.45cm \evensidemargin=-0.46cm
\marginparwidth=60pt

\setlength\arraycolsep{2pt}

\newtheorem{thm}{Theorem}[section]
\newtheorem{lem}[thm]{Lemma}
\newtheorem{prop}[thm]{Propsition}
\newtheorem{cor}[thm]{Corollary}

\numberwithin{equation}{section}

\theoremstyle{remark}
\newtheorem{rem}{Remark}

\newcommand{\hlp}{H_{\theta, p}}
\newcommand{\hl}{H_{\theta}}

\newcommand{\xl}{X_{\theta}}

\linespread{1.2}

\pagestyle{headings}

\begin{document}

\title[Scattering of 2D ICQNLS]{Small data scattering of the INHOMOGENEOUS CUBIC-QUINTIC NLS in 2 dimensions}

\author[Y. Cho]{Yonggeun ChO}
\address{Department of Mathematics, and Institute of Pure and Applied Mathematics, Chonbuk National University, Jeonju 561-756, Republic of Korea}

\author[K. Lee]{Kiyeon Lee}
\address{Department of Mathematics, Chonbuk National University, Jeonju 561-756, Republic of Korea}
\email{leeky@jbnu.ac.kr}

\begin{abstract}
The aim of this paper is to show the small data scattering for 2D ICQNLS:
  $$
  iu_t=-\Delta u + K_1(x)|u|^2u+K_2(x)|u|^4u.
  $$
Under the assumption that $\left| \partial^j K_l \right| \lesssim |x|^{b_l -j}$ for $j=0, 1, 2, l=1, 2$ and $0 \le b_l \le l - \frac23$, we prove the small data scattering in an angularly regular Sobolev space $H_\theta^{1,1}$. We use the decaying property of angularly regular functions, which are defined as functions in Sobolev space $H_\theta^{1, 1} \subset H^1$ with angular regularity such that $\|\partial_\theta f\|_{H^1} < \infty$, and also use the recently developed angularly averaged Strichartz estimates \cite{stri2, cholee, ghn}. In addition, we suggest a sufficient condition for non-existence of scattering.
\end{abstract}

\thanks{2010 {\it Mathematics Subject Classification.} 35Q55, 35Q53. }
\thanks{{\it Key words and phrases.} 2D inhomogeneous NLS, small data scattering, angular regularity}

\maketitle

\section{Introduction}

In this paper we consider the following Cauchy problem for inhomogeneous cubic-quintic nonlinear Schr\"odinger equations (ICQNLS):
\begin{eqnarray}\label{eq:sch}
  \left\{ \begin{array}{c}
           i u_{t} = -\Delta u + K_{1}(x)Q_1(u) + K_{2}(x)Q_2(u)\phantom{1} \mathrm{in}~ \mathbb{R}^{1+2}, \\
           u(x,0)=\varphi(x),
         \end{array} \right.
\end{eqnarray}
where $Q_1(u)=|u|^2 u,Q_{2}(u)=|u|^4 u$, and $K_1 , K_2 \in C^2(\mathbb{R}^2 \backslash \{0\};\mathbb{C})$. The model of ICQNLS \eqref{eq:sch} can be a dilute BEC when both the two- and three-body interactions of the condensate are considered. For this see \cite{bpvt, ts} and the references therein. Also it has been considered to study the laser guiding in an axially nonuniform plasma channel. For this see \cite{gill, ss}.

The interaction coefficients $K_l$ are assumed to satisfy the growth condition: for some constants $b_1,b_2 \ge 0$
\begin{eqnarray}
  \left| \partial^j K_l \right| \lesssim |x|^{b_l -j}, j=0, 1, 2,\; l=1, 2,\label{kl-up}
\end{eqnarray}
where $\partial$ is one of $\partial_j,j=1,2$. Some basic notations are listed at the end of this section.

By Duhamel's formula, the equation \eqref{eq:sch} is written as an integral equation
\begin{eqnarray}
  u = e^{it\Delta}\varphi-i\int_{0}^{t}e^{i(t-t')\Delta}\left[ K_1(x)Q_1(u(t'))+K_2(x)Q_2(u(t')) \right]dt'.
\end{eqnarray}
Here we define the linear propagator $e^{it\Delta}$given by the solution to the linear problem $i\partial_tv=-\Delta v$ with initial data $v(0)=f$. It is formally given by
$$e^{it\Delta}f = \mathcal{F}^{-1}\left(e^{-it|\xi|^2}\mathcal{F}(f)\right)= (2\pi)^{-2}\int_{\mathbb{R}^2} e^{i(x\cdot\xi - t|\xi|^2)}\widehat{f}(\xi)d\xi,$$
where $\widehat{f} = \mathcal F( f)$ denotes the Fourier transform of $f$ and $\mathcal F^{-1}(g)$ the inverse Fourier transform of $g$ such that
$$\mathcal{F}(f)(\xi) = \int_{\mathbb{R}^2} e^{- ix\cdot \xi} f(x)\,dx,\quad \mathcal F^{-1} (g)(x) = (2\pi)^{-2}\int_{\mathbb{R}^2} e^{ix\cdot \xi} g(\xi)\,d\xi.$$

If $K_l$ are real-valued, then we can define the mass and energy of the solution $u$ to \eqref{eq:sch} as follow:
\begin{align*}
m(u(t))&:= ||u(t)||_{L_x^2}^2,\\
E(u(t))&:= \frac12||\nabla u(t)||_{L_x^2}^2 + \frac14\int K_1(x)|u(t,x)|^4 dx + \frac16\int K_2(x)|u(t,x)|^6 dx.
\end{align*}
We say that mass and energy of solution $u$ are conserved if they are constant w. r. t. time.

The inhomogeneous NLS of single nonlinearity with coefficient behaving like $|x|^b (b \in \mathbb R)$ have been extensively studied by the authors of \cite{cg, che, choleem, coge, fa, fagu, fiwa, fuoh, ge, gest, guz, mer, rs, zhu}. In particular, the well-posedness for the coefficient with $b > 0$ has been considered under radial symmetry (\cite{cg,che,zhu}). The radial symmetry plays a crucial role in nonlinear estimates in the energy space thanks to the decaying properties of radial Sobolev functions. Recently the first author of this paper considered 3D ICQNLS without radial symmetry in \cite{3d}, where the well-posedness, finite time blowup and small data scattering are systematically studied based on the decaying properties of angularly regular functions and 3D endpoint Strichartz estimate. Encouraged by this work, we consider 2D scattering problem in this paper. One of the most distinguished feature of 2D scattering problem is that the endpoint Strichartz estimate is forbidden. What is worse is that we cannot control the cubic term to the long time with the usual Strichartz estimates of admissible Schr\"odinger pairs since the coefficient $K_1$ makes the equation \eqref{eq:sch} mass-subcritical in terms of $L^2$-scaling. That is to say a time factor appears and it prevents us from handling solutions globally in time due to the lack of uniform bound of angular Sobolev norm. For this see the local well-posedness in Section $3$. In this paper we will overcome these obstacles. To this end we will use not only decaying property of angularly regular functions but also recently developed angularly averaged Stricharz estimates \cite{stri2, cholee, ghn}, which provides us an extended range of admissible pairs. We will see the detail in Section 4 below.

The angularly regular functions are defined by the angular derivative $\partial_\theta$, where $\theta$ is the argument such that $x = (x_1,x_2) = (|x|\cos\theta, |x|\sin\theta)$. Since $\partial_\theta = x \times \nabla = x_1\partial_2 - x_2\partial_1$, the operator $-i\partial_\theta$ is also referred as angular momentum. Now we define Sobolev spaces $\hlp^{1, 1}$, $\hlp^{2, 1} (1 \le p \le \infty)$ associated with $\partial_\theta$ as follows:
\begin{align*}
H_{\theta, p}^{1, 1} = \{f \in  H_p^1 : \|f\|_{H_{\theta, p}^{1, 1}} = \|(f, \partial_\theta f)\|_{H_p^1} < \infty\}, \quad \hlp^{2, 1} &= \hlp^{1, 1} \cap H_p^2.
\end{align*}
Here $\|(g, \partial_\theta g)\|_{Y}$ denotes $\|g\|_{Y} + \|\partial_\theta g\|_{Y}$ for Banach spaces $Y$ on $\mathbb R^2$, and $H_p^{n}$ denotes the standard $L_x^p$ Sobolev space. If $p = 2$, then we drop the exponent $p$ and denote $ H_{\theta,\, 2}^{n, 1}$ by $\hl^{n, 1}$. Clearly the radial Sobolev space $H_{rad, p}^n $ is embedded in $H_{\theta, p}^{n, 1}$ and $\|f\|_{H_p^1} = \|f\|_{H_{\theta, p}^{n, 1}}$ for any radial function $f$. These spaces give us Sobolev type inequalities associated with angular regularity such as
\begin{align}\label{ang-sobo}
\||x|^bf\|_{L_x^\infty} \lesssim \|f\|_{\hl^{1, 1}}\;\;\mbox{for}\;\; 0 < b \le \frac{1}{2}.
\end{align}
This estimate is crucial for our nonlinear estimates. See Lemma \ref{lem;ang-soin} below.

We first introduce the local result. The equation (\ref{eq:sch}) is said to be locally well-posed (LWP) in $H_{\theta}^{n}$ if there exist maximal interval $I_* = (-T_*,T^*)$ and a unique solution $u \in C(I_*,H_{\theta}^{n})$ with continuous dependency on the initial data and blowup alternative $\left(T^* < \infty \Rightarrow \lim_{t \to T^*}||u(t)||_{H_{\theta}^{n}}=\infty  \right)$. By the contraction argument based on the usual Strichartz estimates and the inequality \eqref{ang-sobo} we have the following local result.
%%%%%%%%%%%%%%%%%%%%%%%%%%%%%%%%%%%%%%%%%%%%%%%%%%%%%%%%%%%%%%%%%%%%%%%%%%%%%%%%%%%%%%%%%%%%%%%%%%%%%%%%%%%%%%%%%%%%%%%%%%%%%%%%%%%%%%%%%%%%%%%%%%%%%%%%%%%%%%%%%%%%%%%%%%%%%%%%%
%%%%%%%%%%%%%%%%%%%%%%%%%%%%%%%%%%%%%%%%%%%%%%%%%%%%%%%%%%%%%%%%%%%%%%%%%%%%%%%%%%%%%%%%%%%%%%%%%%%%%%%%%%%%%%%%%%%%%%%%%%%%%%%%%%%%%%%%%%%%%%%%%%%%%%%%%%%%%%%%%%%%%%%%%%%%%%%%%
\begin{prop}\label{thm;lwp}
{\bf(1) Local well-posedness} If $0 \leq b_1 \leq \frac{1}{2}$ and $0 \leq b_2 \leq \frac{3}{2}$, then (\ref{eq:sch}) is locally well-posedness in $H_{\theta}^{n, 1}$, $n = 1, 2$.
Moreover, if $K_1,K_2$ are real-valued, then mass and energy are conserved.\\
{\bf (2) Blowup criterion} Let $K_1$ and $K_2$ be real-valued functions as above, and satisfy the rigidity condition that
\begin{align}\label{rigid}
-x\cdot \nabla K_1 \le \alpha K_1\;\; \mbox{and}\;\; 2K_2 - x\cdot \nabla K_2 \le \alpha K_2
\end{align}
 for some $\alpha  \ge 0$. Suppose that $E(\varphi) < 0$ and $x\varphi \in L_x^2$. Then the solutions blow up in finite time.\\
{\bf (3) Radial global well-posedness} Let $K_l$, $l = 1, 2$ be radially symmetric functions. Suppose that $K_l \ge 0$ and $\varphi \in H_{rad}^1$ or that $K_l$ may be negative or change their sign, and $\|\varphi\|_{H_{rad}^1}$ is sufficiently small. Then \eqref{eq:sch} is globally well-posed in $H_{rad}^1$.
\end{prop}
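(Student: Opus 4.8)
The three parts are essentially independent, so I would treat them separately. For part (1), the plan is a standard contraction-mapping argument, but carried out in a space that also controls $\partial_\theta u$. Fix an admissible Strichartz pair, say the $L_t^\infty L_x^2$ and $L_t^4 L_x^4$ norms, and set up the solution map $\Phi(u) = e^{it\Delta}\varphi - i\int_0^t e^{i(t-t')\Delta}[K_1 Q_1(u) + K_2 Q_2(u)]\,dt'$ on a ball in $C(I;H_\theta^{n,1})$ intersected with the relevant Strichartz spaces. The key observation is that $\partial_\theta$ is a first-order differential operator that commutes with $e^{it\Delta}$ (since it commutes with $\Delta$) and acts as a derivation on the nonlinearity: $\partial_\theta(K_l Q_l(u)) = (\partial_\theta K_l) Q_l(u) + K_l \,\partial_\theta Q_l(u)$, where $\partial_\theta Q_l(u)$ is a sum of terms each containing one factor of $\partial_\theta u$ (or $\overline{\partial_\theta u}$) and the rest copies of $u,\bar u$. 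To close the estimate one distributes the weights $|x|^{b_l}$ coming from \eqref{kl-up}: for the quintic term write $K_2 Q_2(u) = (|x|^{b_2}K_2\,|x|^{-b_2})\,u^3\bar u^2$ and peel off powers $|x|^{b_2/5}$ onto each factor, applying \eqref{ang-sobo} — more precisely Lemma \ref{lem;ang-soin} — to bound $\||x|^{b_2/5}u\|_{L_x^\infty}\lesssim \|u\|_{H_\theta^{1,1}}$ as long as $b_2/5 \le 1/2$, i.e. $b_2 \le 5/2$, which is implied by $b_2\le 3/2$; the remaining factors are put into the Strichartz norms, with a short time interval supplying the smallness (equivalently a small factor $|I|^{1/q}$) needed for contraction without any size restriction on $\varphi$. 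The analogous bookkeeping for the cubic term with exponent $b_1/3 \le 1/2$ needs $b_1\le 3/2$, so the hypothesis $b_1\le 1/2$ is comfortably sufficient. For $n=2$ one repeats the argument after applying a second derivative, using that $\partial^2(K_l Q_l)$ again obeys a Leibniz expansion with weights $|x|^{b_l-j}$, $j\le 2$. Continuous dependence and the blowup alternative follow from the contraction estimates in the usual way. Conservation of mass and energy when $K_l$ are real follows by multiplying the equation by $\bar u$ (resp.\ $\bar u_t$), integrating, and taking real parts; this is formally immediate and is justified on the $H_\theta^{2,1}$ solutions, then extended to $H_\theta^{1,1}$ by approximation.

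For part (2), the plan is the classical Glassey virial/convexity argument. Define the variance $V(t) = \int |x|^2 |u(t,x)|^2\,dx$, which is finite for all $t$ in the existence interval because $x\varphi\in L_x^2$ is propagated by the equation (a pseudoconformal-type a priori bound, proved by differentiating $\int|x|^2|u|^2$ and Gronwall). Compute $V''(t)$: for the free part one gets the usual $8\|\nabla u\|_{L_x^2}^2$, and the potential terms contribute expressions of the form $\int (2K_l - \tfrac{?}{} x\cdot\nabla K_l)|u|^{2l+2}$ after an integration by parts against $x\cdot\nabla$. Matching the exponents one finds $V''(t) = 8\|\nabla u\|_{L_x^2}^2 + 2\int(- x\cdot\nabla K_1)|u|^4 + \tfrac{4}{3}\int(2K_2 - x\cdot\nabla K_2)|u|^6$ up to the precise constants (these are what the normalizations in $E(u)$ are chosen to produce). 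Now invoke the rigidity hypothesis \eqref{rigid}: bounding the potential integrands by $\alpha K_1|u|^4$ and $\alpha K_2|u|^6$ and comparing with the definition of $E$, we get $V''(t) \le C\big(E(\varphi) + (\text{something})\|\nabla u\|_{L_x^2}^2\big)$ with the sign structure forcing $V''(t)\le 8(\text{const})E(\varphi) < 0$ once the kinetic term is absorbed — the standard outcome being $V''(t) \le 4\alpha E(\varphi)$ or $V''(t)\le 8E(\varphi)$ depending on the value of $\alpha$. Since $E(\varphi)<0$ and $V\ge 0$, convexity forces $V$ to hit zero in finite time, contradicting $V\ge 0$ unless the solution has already ceased to exist; hence $T^*<\infty$ (and similarly $T_*<\infty$).

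For part (3), in the radial setting $H_{rad}^1 \hookrightarrow H_\theta^{1,1}$ with equal norms, so part (1) already gives local well-posedness in $H_{rad}^1$; the task is to produce a global-in-time a priori bound on $\|u(t)\|_{H^1}$. Mass conservation handles the $L^2$ part. For the $H^1$ part one uses energy conservation: $\tfrac12\|\nabla u\|_{L_x^2}^2 = E(\varphi) - \tfrac14\int K_1|u|^4 - \tfrac16\int K_2|u|^6$. When $K_l\ge 0$ the two potential integrals are nonnegative (as $Q_l$ have the same sign structure), so $\|\nabla u\|_{L_x^2}^2 \le 2E(\varphi)$ directly and we are done. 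When the $K_l$ may change sign, bound $\big|\int K_l|u|^{2l+2}\big| \lesssim \||x|^{b_l}|u|^{2l+2}\|_{L_x^1}$ and distribute weights as in part (1): peel $|x|^{b_l/(2l+2)}$ onto two factors via \eqref{ang-sobo}/Lemma \ref{lem;ang-soin} (radial Sobolev embedding), controlling those two factors by $\|u\|_{H^1}^2$, and put the remaining $2l$ factors in $L_x^2$ via mass conservation — or, more cleanly, use the radial Gagliardo–Nirenberg/Strauss inequality to get $\int K_l|u|^{2l+2} \lesssim \|u\|_{L^2}^{\sigma_l}\|\nabla u\|_{L^2}^{\tau_l}$ with $\tau_l < 2$ thanks to the weight $b_l>0$ lowering the effective power. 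Then energy conservation reads $\|\nabla u\|_{L^2}^2 \le 2|E(\varphi)| + C\|\varphi\|_{L^2}^{\sigma_l}\|\nabla u\|_{L^2}^{\tau_l}$ with $\tau_l<2$, and since $\|\varphi\|_{H^1}$ is small, a continuity/bootstrap argument gives a uniform bound $\|\nabla u(t)\|_{L^2}\lesssim \|\varphi\|_{H^1}$ for all $t$. By the blowup alternative this rules out finite-time blowup, giving global well-posedness.

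The main obstacle is the weight-distribution bookkeeping in part (1): one must verify that in every term of the Leibniz expansion of $\partial_\theta^k(K_l Q_l(u))$ ($k\le n$) the power of $|x|$ that lands on each $u$-factor stays in the admissible range $(0,1/2]$ for \eqref{ang-sobo} while the Strichartz exponents still sum correctly and a positive power of $|I|$ is available — this is exactly the place where the hypotheses $b_1\le\tfrac12$, $b_2\le\tfrac32$ enter, and it is why scattering (removing $|I|$) requires the stronger thresholds $b_l\le l-\tfrac23$ discussed later in the paper. Part (2)'s only subtlety is justifying $V,V',V''$ rigorously (finiteness of the variance and legitimacy of the computations), which is routine via the $H_\theta^{2,1}$ regularity from part (1) plus approximation; part (3) is then a short consequence of the conservation laws and the weighted estimates already developed.
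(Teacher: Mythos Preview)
Your overall strategy for all three parts matches the paper's, but there is a genuine gap in part~(1) and a misstatement in part~(3).

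\textbf{Part (1).} You expand only $\partial_\theta(K_lQ_l)$ and note that $|\partial_\theta K_l|\lesssim |x|^{b_l}$, which keeps the weight nonnegative. But $H_\theta^{1,1}$ requires $H^1$ control, so a spatial derivative $\partial$ must also be applied; the paper actually estimates $N_l^{1,1}=\partial\,\partial_\theta\!\int_0^t e^{i(t-t')\Delta}K_lQ_l\,dt'$ (and $N_l^2=\partial^2\!\int\ldots$ for $n=2$). When $\partial$ hits $K_l$ one gets $|\partial K_l|\lesssim |x|^{b_l-1}$ and $|\partial\partial_\theta K_l|\lesssim |x|^{b_l-1}$, genuinely singular weights that your scheme ``peel off $|x|^{b_l/(2l+1)}$ onto each factor and apply \eqref{ang-sobo}'' cannot absorb. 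The paper deals with these terms by pairing the angular Sobolev inequality with the Hardy--Sobolev inequality of Lemma~\ref{lem;soin}: for instance
\[
\big\||x|^{b_1-1}|u|^2|\partial_\theta u|\big\|_{L_{I_T}^{4/3}L_x^{4/3}}
\lesssim T^{3/4}\,\big\||x|^{b_1+\varepsilon}u\big\|_{L_t^\infty L_x^\infty}\,\big\||x|^{-(1-\varepsilon)}u\big\|_{L_t^\infty L_x^2}\,\|\partial_\theta u\|_{L_t^\infty L_x^4},
\]
with the first factor handled by Lemma~\ref{lem;ang-soin} (requiring $b_1+\varepsilon\le\tfrac12$) and the second by Lemma~\ref{lem;soin}. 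This is exactly where the restriction $b_1\le\tfrac12$ enters; your remark that $b_1/3\le 1/2$ suffices, so that ``$b_1\le\tfrac12$ is comfortably sufficient'' for a $b_1\le\tfrac32$ requirement, misidentifies the real bottleneck. The endpoint cases $b_l=0$ and $b_1=\tfrac12$, $b_2=\tfrac32$ need separate treatment in the paper for the same reason.

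\textbf{Part (3).} Your claim that the Gagliardo--Nirenberg exponent satisfies $\tau_l<2$ is correct for the cubic term ($\tau_1=2-b_1$) but false for the quintic: the radial Sobolev inequality (Lemma~\ref{radial-soin}) gives $\int K_2|u|^6\,dx\lesssim m(\varphi)^{1+b_2/2}\|\nabla u\|_{L^2}^{4-b_2}$ with $4-b_2\ge\tfrac52>2$. If $\tau_l<2$ held throughout, Young's inequality alone would close the bound and no smallness would be needed. It is precisely because the quintic exponent exceeds $2$ that smallness of $\|\varphi\|_{H_{rad}^1}$ is required and the continuity argument becomes necessary; your continuity/bootstrap step is the right mechanism, but for the opposite reason from the one you state.

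Part~(2) is essentially the paper's argument.
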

%%%%%%%%%%%%%%%%%%%%%%%%%%%%%%%%%%%%%%%%%%%%%%%%%%%%%%%%%%%%%%%%%%%%%%%%%%%%%%%%%%%%%%%%%%%%%%%%%%%%%%%%%%%%%%%%%%%%%%%%%%%%%%%%%%%%%%%%%%%%%%%%%%%%%%%%%%%%%%%%%%%%%%%%%%%%%%%%%
Since our method relies heavily on the angular Sobolev estimate \eqref{ang-sobo}, we cannot control  without further assumption the growing coefficients of nonlinear term for present in case that $b_1 > \frac12$ or $b_2 > \frac32$. If $b_1 = b_2 = 0$, then \eqref{eq:sch} can be readily shown to be locally well-posed in $H^1$. See Remark \ref{lwp-in-h1} below.
%%%%%%%%%%%%%%%%%%%%%%%%%%%%%%%%%%%%%%%%%%%%%%%%%%%%%%%%%%%%%%%%%%%%%%%%%%%%%%%%%%%%%%%%%%%%%%%%%%%%%%%%%%%%%%%%%%%%%%%%%%%%%%%%%%%%%%%%%%%%%%%%%%%%%%%%%%%%%%%%%%%%%%%%%%%%%%%%%
%%%%%%%%%%%%%%%%%%%%%%%%%%%%%%%%%%%%%%%%%%%%%%%%%%%%%%%%%%%%%%%%%%%%%%%%%%%%%%%%%%%%%%%%%%%%%%%%%%%%%%%%%%%%%%%%%%%%%%%%%%%%%%%%%%%%%%%%%%%%%%%%%%%%%%%%%%%%%%%%%%%%%%%%%%%%%%%%%
For the proof of blowup criterion we prove the localized virial argument for which the weighted condition $|x|\varphi\in L_x^2$ and the rigidity condition \eqref{rigid} of $K_l$ are necessary. The rigidity condition of coefficients $K_l$ is not sharp and can be modified. Once a regular solution exists even for the case that $b_1 > \frac12$ or $b_2 > \frac32$, the finite time blowup can be shown to occur by the same virial argument.
%%%%%%%%%%%%%%%%%%%%%%%%%%%%%%%%%%%%%%%%%%%%%%%%%%%%%%%%%%%%%%%%%%%%%%%%%%%%%%%%%%%%%%%%%%%%%%%%%%%%%%%%%%%%%%%%%%%%%%%%%%%%%%%%%%%%%%%%%%%%%%%%%%%%%%%%%%%%%%%%%%%%%%%%%%%%%%%%%
%%%%%%%%%%%%%%%%%%%%%%%%%%%%%%%%%%%%%%%%%%%%%%%%%%%%%%%%%%%%%%%%%%%%%%%%%%%%%%%%%%%%%%%%%%%%%%%%%%%%%%%%%%%%%%%%%%%%%%%%%%%%%%%%%%%%%%%%%%%%%%%%%%%%%%%%%%%%%%%%%%%%%%%%%%%%%%%%%
Under the radial symmetry one can perform the standard time iteration method through the uniform bound of $H^1$ norm, which is guaranteed by energy conservation and time continuity argument. We refer the readers to Section 3.3.
%%%%%%%%%%%%%%%%%%%%%%%%%%%%%%%%%%%%%%%%%%%%%%%%%%%%%%%%%%%%%%%%%%%%%%%%%%%%%%%%%%%%%%%%%%%%%%%%%%%%%%%%%%%%%%%%%%%%%%%%%%%%%%%%%%%%%%%%%%%%%%%%%%%%%%%%%%%%%%%%%%%%%%%%%%%%%%%%%

%%%%%%%%%%%%%%%%%%%%%%%%%%%%%%%%%%%%%%%%%%%%%%%%%%%%%%%%%%%%%%%%%%%%%%%%%%%%%%%%%%%%%%%%%%%%%%%%%%%%%%%%%%%%%%%%%%%%%%%%%%%%%%%%%%%%%%%%%%%%%%%%%%%%%%%%%%%%%%%%%%%%%%%%%%%%%%%%%
Now we state our main result, small data scattering.
%\begin{defn}\label{defn;scattering}
We say that a solution $u$ to \eqref{eq:sch} scatters (to $u_\pm$) in a Hilbert space $\mathcal X$ if there exist  $\varphi_\pm \in \mathcal X$ $($with $u_\pm(t) = e^{it{\Delta}}\varphi_\pm)$ such that $\lim_{t  \to \pm\infty}\|u(t) - u_\pm\|_{\mathcal X} = 0$	
%\end{defn}
We have the following.
\begin{thm}\label{thm;scattering}
	Let $0 \le b_1 < \frac13$ and $0 \le b_2 < \frac43$.  If $\|\varphi\|_{H_{\theta}^{1, 1}}$ is sufficiently small, then there exists a unique $u \in (C \cap L^\infty)(\mathbb R ; H_{\theta}^{1, 1})$ to \eqref{eq:sch} which scatters in $\hl^{1,1}$.
\end{thm}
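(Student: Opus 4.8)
The plan is to run a standard contraction-mapping argument for the Duhamel map in a space built on the angularly averaged Strichartz estimates, which buys us the extra integrability needed to close the cubic term globally in time. Concretely, I would define the solution space $X$ to consist of functions $u$ with $(u,\partial_\theta u)$ in $C(\mathbb R;H^1)$ together with a finite norm of the form $\|u\|_X = \|(u,\partial_\theta u)\|_{L_t^\infty H^1} + \sum_j \|(u,\partial_\theta u)\|_{L_t^{q_j}H_{r_j}^{1}}$, where the pairs $(q_j,r_j)$ include at least one angularly averaged admissible pair with $q_j$ large (so that the time factor coming from the mass-subcritical cubic term is absorbed) and one classical admissible pair for the quintic term. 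The closed ball $B_\delta = \{u\in X : \|u\|_X \le \delta\}$ with $\delta\sim\|\varphi\|_{H_\theta^{1,1}}$ will be the set on which we contract.

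The key steps, in order: (i) record the linear estimates — the homogeneous estimate $\|e^{it\Delta}\varphi\|_X \lesssim \|\varphi\|_{H_\theta^{1,1}}$ and the inhomogeneous (Christ–Kiselev) estimate bounding the Duhamel term in $X$ by the appropriate dual Strichartz norm of the forcing — noting that both commute with $\partial_\theta$ since $\partial_\theta$ commutes with $e^{it\Delta}$ and with multiplication structure up to the product rule; (ii) the nonlinear estimate for the cubic term, $\|K_1 Q_1(u)\|_{N} \lesssim \||x|^{b_1}\cdots\|$, where one factor of $|x|^{b_1}$ is paired with one copy of $u$ via the angular Sobolev inequality \eqref{ang-sobo} (valid since $b_1 < \tfrac13 \le \tfrac12$) to produce an $L_t^\infty L_x^\infty$-type bound, while the remaining two factors are placed in the Strichartz norms; here the gain from $b_1<\tfrac13$ (strictly below the LWP threshold $\tfrac12$) is precisely what makes the relevant time-exponent summable/bounded on $\mathbb R$ rather than only on finite intervals; (iii) the analogous estimate for the quintic term $K_2 Q_2(u)$, distributing $|x|^{b_2}$ — which may need to be split as $|x|^{b_2} = |x|^{b_2'}|x|^{b_2''}$ with each piece $\le\tfrac12$ since $b_2<\tfrac43$ can exceed $\tfrac12$ — across two of the five factors via \eqref{ang-sobo}, leaving three factors for Strichartz; (iv) the Leibniz step controlling $\partial_\theta(K_l Q_l(u))$, which produces terms with $\partial_\theta K_l$ (still $\lesssim |x|^{b_l}$, so estimated as before) and terms with $\partial_\theta u$ replacing one $u$ (estimated with $\partial_\theta u$ in a Strichartz norm and the other factors as before); (v) assembling (i)–(iv) to get $\|\Phi(u)\|_X \le C\|\varphi\|_{H_\theta^{1,1}} + C(\delta^3+\delta^5)$ and the difference estimate $\|\Phi(u)-\Phi(v)\|_X \le C(\delta^2+\delta^4)\|u-v\|_X$, so that for $\delta$ small $\Phi$ is a contraction on $B_\delta$; (vi) deducing scattering: $e^{-it\Delta}u(t)$ is Cauchy in $H_\theta^{1,1}$ as $t\to\pm\infty$ because the tail $\int_t^{\pm\infty} e^{-it'\Delta}(K_1Q_1+K_2Q_2)\,dt'$ has $H_\theta^{1,1}$-norm controlled by the Strichartz norms of the nonlinearity over $[t,\pm\infty)$, which vanish as $t\to\pm\infty$ by dominated convergence.

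The main obstacle I expect is step (ii): choosing the angularly averaged admissible pair $(q_1,r_1)$ so that the cubic nonlinear estimate closes \emph{uniformly on all of $\mathbb R$}. In the classical Strichartz framework the cubic term $K_1|u|^2u$ is $L^2$-mass-subcritical (because $K_1$ scales like $|x|^{b_1}$ with $b_1>0$ only shifts this), so a positive power of the time interval length appears and the argument only survives on finite intervals — exactly the difficulty flagged in the introduction. The point of the angularly averaged estimates is that they enlarge the admissible range enough to find a pair for which the time exponent is zero or negative; verifying that the condition $b_1<\tfrac13$ is exactly what permits this, and carefully tracking Hölder exponents (in both $t$ and the radial/angular variables) through the angular Sobolev embedding, is the delicate computational heart of the proof. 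The quintic term, being energy-critical-flavored ($b_2$ up to $\tfrac43$), is more forgiving but requires the two-factor splitting of $|x|^{b_2}$ to be compatible with keeping all remaining norms finite.
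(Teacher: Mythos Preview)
Your overall plan---contraction mapping via angularly averaged Strichartz estimates, Leibniz for $\partial_\theta$, and the algebraic difference estimate---matches the paper's structure. But two concrete points in your execution would prevent the argument from closing as written.

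First, the angularly averaged pair you need is not one with ``$q_j$ large'' but rather $(2,r)$ with $r>6$; the whole point of Lemma~\ref{lem;stri-extended} is that angular averaging makes $q=2$ available in two dimensions, where the classical endpoint $(2,\infty)$ fails. The paper's solution norm therefore includes $\|(u,\partial_\theta u)\|_{L_t^2 B_{\theta,r,2}^{1+s(2,r)}}$, essentially an $L_t^2 L_\rho^r L_\theta^2$ norm at the $H^1$ level. Without $q=2$ you cannot place two of the three cubic factors in $L_t^2$-type norms and land the product in $L_t^1 L_x^2$ with no residual power of $T$.

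Second---and this is the decisive gap---the angular Sobolev inequality \eqref{ang-sobo} in its $L_x^\infty$ form does \emph{not} close the key cubic estimate. If you put $|x|^{b_1}u$ in $L_t^\infty L_x^\infty$, the remaining product $u\,\partial u$ must sit in $L_t^1 L_x^2$; H\"older then forces each factor into an $L_\rho^4$-type slot, but the angularly averaged Strichartz estimate only reaches $L_\rho^r$ with $r>6$. The paper instead uses Corollary~\ref{decay}, the refined bound $\||x|^{\frac12-\frac1p}f\|_{L_\rho^p L_\theta^\infty}\lesssim\|f\|_{H_\theta^{1,1}}$, to place the weighted factor in $L_\rho^{2r/(r-4)}L_\theta^\infty$ with \emph{finite} radial exponent, so that H\"older matches two copies of $L_\rho^r L_\theta^2$ (one of them upgraded to $L_\theta^\infty$ via the angular derivative). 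The weight absorbed this way is $|x|^{2/r}$, and since $r>6$ one is forced to $b_1\le 2/r<\tfrac13$: this is the actual origin of the threshold, not a time-exponent balance as you suggest. You will also need to split the nonlinearity into $\{|x|\le 1\}$ (handled by classical Strichartz plus Hardy--Sobolev, since the growing coefficient is harmless there but $|\partial K_l|\sim |x|^{b_l-1}$ is singular) and $\{|x|\ge 1\}$ (where the mechanism above applies); this decomposition is absent from your outline. The quintic term for $1\le b_2<\tfrac43$ is then reduced to the cubic case by pulling out $\||x|^{1/2}u\|_{L_t^\infty L_x^\infty}^2$ via \eqref{ang-sobo} and treating $|x|^{b_2-1}$ as a new cubic weight with exponent $<\tfrac13$.
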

The main ingredient of the proof is how to control globally in time the mass-subcritical nature of cubic term occurring due to the growing coefficient. This can be settled by using the angularly averaged Strichartz estimates \eqref{str-besov1} and \eqref{str-besov3}, which hold for a wider range of admissible pairs. Let us brief on the key steps. Taking a angular and space derivatives to the Duhamel part and then applying the extended Strichartz estimates, we find several nonlinear estimates. One of the most significant part of nonlinear estimate is $\|(\partial_\theta K_1)|u|^2|\partial u|\|_{L_t^1L_x^2(|x|\ge 1)} (0 < b_1 < \frac13)$, which can be bounded by $$\||x|^{b_1}u\|_{L_t^\infty L_\rho^\frac{2r}{r-4}L_\theta^\infty}\|u\|_{L_t^2L_\rho^rL_\theta^2}\|\partial u\|_{L_t^2L_\rho^rL_\theta^\infty}$$ for some $r > 6$ and near $6$. We control the first norm by Sobolev inequality in $H_\theta^{1,1}$ of Corollary \ref{decay} below and hence we need the condition $b_1 < \frac13$. We control the second and third norms by angularly averaged Strichartz estimates for the pair $(2, r)$. As for the quintic term we find $\|(\partial_\theta K_2)|u|^4|\partial u|\|_{L_t^1L_x^2(|x| \ge 1)}$ ($b_2 > 1$), which can be easily bounded by $\||x|^\frac12u \|_{L_t^\infty L_x^\infty}^2\||x|^{b_2-1}|u|^2|\partial u|\|_{L_t^1L_x^2(|x|\ge 1)}$. We use \eqref{ang-sobo} for the first norm and the previous cubic estimate for the later, for which the condition $b_2 < \frac43$ is necessary.
%%%%%%%%%%%%%%%%%%%%%%%%%%%%%%%%%%%%%%%%%%%%%%%%%%%%%%%%%%%%%%%%%%%%%%%%%%%%%%%%%%%%%%%%%%%%%%%%%%%%%%%%%%%%%%%%%%%%%%%%%%%%%%%%%%%%%%%%%%%%%%%%%%%%%%%%%%%%%%%%%%%%%%%%%%%%%%%%%

%%%%%%%%%%%%%%%%%%%%%%%%%%%%%%%%%%%%%%%%%%%%%%%%%%%%%%%%%%%%%%%%%%%%%%%%%%%%%%%%%%%%%%%%%%%%%%%%%%%%%%%%%%%%%%%%%%%%%%%%%%%%%%%%%%%%%%%%%%%%%%%%%%%%%%%%%%%%%%%%%%%%%%%%%%%%%%%%%
On the other hand, in view of the previous work of \cite{barab, choz} one can expect a non-existence of scattering if $b_1$ is large. The reason is that a large $b_1$ leads us to an effect of the long-range scattering. Here we give a sufficient conditions as follows.
\begin{thm}\label{thm;nonscattering}
	Assume that $K_1(x)=|x|^{b_1}$ and $K_2(x)=|x|^{b_2}$ for $b_1 \ge 1,0 \le b_2 \le 2+b_1$. Let $u$ be a global smooth solution of \eqref{eq:sch} with $xu \in C(\mathbb R; L_x^2)$, which scatters to $u_{\pm}=e^{it\Delta}\varphi_\pm$ in $L_x^2$ for some smooth function $\varphi_\pm$. Then $u,u_\pm \equiv 0$.
\end{thm}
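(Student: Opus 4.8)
The plan is to run a Morawetz/virial-type obstruction argument. For a solution with $xu \in C(\mathbb R; L_x^2)$ the variance $V(t) = \||x|u(t)\|_{L_x^2}^2$ is well-defined and, after checking enough regularity/decay to justify the formal computations, twice differentiable in $t$. The pseudoconformal (Morawetz) identity gives $\frac{d^2}{dt^2}V(t) = 16 E(u(t)) - \frac{8}{j+1}\int (x\cdot\nabla K_j(x) + \text{(const)}\,K_j(x))|u|^{2j+2}\,dx$ after collecting the cubic ($j=1$) and quintic ($j=2$) contributions; more precisely one computes $\frac{d}{dt}\int |x|^2|u|^2\,dx = 4\,\mathrm{Im}\int \bar u\, x\cdot\nabla u\,dx$ and differentiates once more, using the equation and integration by parts. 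With $K_1(x) = |x|^{b_1}$ one has $x\cdot\nabla K_1 = b_1 K_1$, and similarly for $K_2$, so the $K_j$-terms combine with the potential-energy pieces of $E(u)$ into expressions of the form $c_1(b_1)\int |x|^{b_1}|u|^4\,dx + c_2(b_2)\int |x|^{b_2}|u|^6\,dx$ with explicitly computable constants $c_1, c_2$. The point is that for $b_1 \ge 1$ (and $0 \le b_2 \le 2 + b_1$, which will be exactly what makes the quintic constant have a compatible sign or at worst be controlled) these combine so that $\frac{d^2}{dt^2}V(t)$ is bounded below by a fixed positive multiple of the kinetic energy plus nonnegative potential terms — in particular $\frac{d^2}{dt^2}V(t) \gtrsim \|\nabla u(t)\|_{L_x^2}^2 \ge 0$ whenever $u \not\equiv 0$.

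Next I would exploit the scattering hypothesis to contradict this convexity. If $u$ scatters in $L_x^2$ to $e^{it\Delta}\varphi_+$, then $\|u(t)\|_{L_x^2} = \|\varphi_+\|_{L_x^2}$ by $L^2$-conservation (the $K_j$ are real), and the Duhamel term $u(t) - e^{it\Delta}\varphi_+$ tends to $0$ in $L_x^2$. Feeding this into the Morawetz quantity: by the standard pseudoconformal change of variables, $V(t)$ is comparable, for the free evolution, to $t^2\|\nabla\varphi_+\|_{L_x^2}^2 + O(t)$, so $V(t)$ grows at most quadratically and in fact $V(t)/t^2 \to \|\nabla\varphi_+\|_{L_x^2}^2$ as $t \to +\infty$; meanwhile the nonlinear corrections to $V$ are lower order because the Duhamel part is asymptotically negligible and the potential energies $\int |x|^{b_j}|u|^{2j+2}$ decay (this is where one uses that the scattering solution disperses). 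Integrating $\frac{d^2}{dt^2}V(t) \gtrsim \|\nabla u(t)\|_{L_x^2}^2$ twice would then force $V(t)$ to grow quadratically with rate controlled by $\int_0^\infty\int_0^s \|\nabla u\|_{L_x^2}^2$, which, matched against the precise free-evolution asymptotics, pins down $\|\nabla\varphi_+\|_{L_x^2}$ and ultimately forces $\int_{\mathbb R}\|\nabla u(t)\|_{L_x^2}^2\,dt < \infty$; combined with $L^2$-conservation and an interpolation/Gagliardo–Nirenberg bound this makes $\|u(t)\|_{L^4_x}, \|u(t)\|_{L^6_x} \to 0$, hence the time-integrated potential energies vanish and $\|\nabla\varphi_+\|_{L^2} = 0$, i.e. $\varphi_+$ is constant, hence $0$ by $L^2$-integrability, hence $u \equiv 0$ by uniqueness, and then $u_\pm \equiv 0$.

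Alternatively, and perhaps more cleanly, I would argue directly from monotonicity: set $W(t) = \frac{d}{dt}V(t) = 4\,\mathrm{Im}\int \bar u\, x\cdot\nabla u$. Then $W$ is nondecreasing (from $\frac{d}{dt}W \ge 0$), so $W(t) \to L_\pm$ as $t \to \pm\infty$ with $L_- \le L_+$; if $u \not\equiv 0$ then $\frac{d}{dt}W > 0$ on a positive-measure set, giving $L_- < L_+$, so $W(t) \ge c > 0$ for $t$ large, forcing $V(t) \to \infty$ linearly at least. But the scattering hypothesis and the pseudoconformal identity give $W(t) = \frac{d}{dt}\big(t^2\|\nabla\varphi_\pm\|^2 + o(t^2)\big)$, whose long-time behavior is consistent only if the extra monotonicity increment integrates to something absorbed by the $o$-terms — pushing through the asymptotics shows $\int \|\nabla u(t)\|^2\,dt < \infty$ again, and the argument concludes as before.

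The main obstacle I expect is twofold. First, the rigorous justification of the virial identities: the hypothesis gives $xu \in C(\mathbb R; L^2)$ and smoothness, but one must verify enough decay in $x$ (or approximate by truncated weights $\chi(x/R)|x|^2$ and control the error terms as $R \to \infty$) to differentiate $V$ twice and integrate by parts without boundary contributions — the growing coefficients $|x|^{b_j}$ make these error terms delicate, especially the quintic one, and this is precisely why the constraint $b_2 \le 2 + b_1$ enters, ensuring the quintic virial term does not overwhelm the favorable cubic/kinetic terms. Second, extracting the quantitative decay of the potential energies along the scattering solution: $L^2$-scattering alone is weak, so one needs to combine it with $L^2$-conservation plus the a priori finiteness of the Morawetz output (the integrated kinetic energy) to bootstrap decay of $\|u(t)\|_{L^4}$ and $\|u(t)\|_{L^6}$ — making this step precise, rather than merely formal, is the technical heart of the proof.
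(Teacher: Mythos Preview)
Your approach has a genuine gap and is not the route the paper takes. The virial computation gives, with $K_l=|x|^{b_l}$,
\[
\frac{d^2}{dt^2}\||x|u\|_{L^2}^2 \;=\; 8\|\nabla u\|_{L^2}^2 + 2(2-b_1)\int |x|^{b_1}|u|^4\,dx + \frac{4(4-b_2)}{3}\int |x|^{b_2}|u|^6\,dx,
\]
so the cubic coefficient is $\le 0$ as soon as $b_1\ge 2$, and the quintic one is $\le 0$ once $b_2\ge 4$ (which is allowed since $b_2\le 2+b_1$). Your claimed lower bound $\frac{d^2}{dt^2}V\gtrsim\|\nabla u\|_{L^2}^2$ therefore fails in the regime the theorem covers. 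Even on the subrange $1\le b_1<2$ where the signs cooperate, no contradiction arises: since $K_l\ge 0$, energy conservation gives $\|\nabla u(t)\|_{L^2}^2\le 2E(\varphi)$ and, once the potential energy decays, $\|\nabla u(t)\|_{L^2}^2\to 2E(\varphi)$; hence $\frac{d^2}{dt^2}V(t)\to 16E(\varphi)$, which is perfectly compatible with the free asymptotics $V(t)\sim 4t^2\|\nabla\varphi_+\|_{L^2}^2$. In particular $\int_{\mathbb R}\|\nabla u(t)\|_{L^2}^2\,dt=\infty$ for any nontrivial solution, so your bootstrap to $\int\|\nabla u\|^2<\infty$ cannot succeed. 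The monotonicity variant has the same defect: $W=V'$ increasing and $V$ growing quadratically are consistent, not contradictory.

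The paper's argument is the Barab-type long-range obstruction. One studies the bounded quantity $H(t)=-\mathrm{Im}\int u(t)\overline{u_+(t)}\,dx$ and computes $\frac{d}{dt}H(t)=\mathrm{Re}\int(K_1|u|^2u+K_2|u|^4u)\overline{u_+}\,dx$. The main term is $\int|x|^{b_1}|u_+|^4\,dx$, which one shows is $\gtrsim t^{-(2-b_1)}$ by restricting to $\delta t\le|x|\le kt$ and using that the free mass concentrates there. All remaining pieces are $o(t^{-(2-b_1)})$: for the cross terms involving $u-u_+$ one uses the dispersive bound $\||x|^\theta u_+(t)\|_{L^\infty}\lesssim t^{-(1-\theta)}$, and crucially one needs the potential-energy decay $\int|x|^{b_1}|u|^4+\int|x|^{b_2}|u|^6\lesssim t^{-(2-b_1)}$, which comes from the pseudo-conformal identity together with a Gronwall argument (this is where the condition $b_2\le 2+b_1$ enters, to keep the quintic contribution under control). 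Integrating $\frac{d}{dt}H\gtrsim t^{-(2-b_1)}$ over $[1,t]$ diverges for $b_1\ge 1$, contradicting the boundedness of $H$. The virial/pseudo-conformal identity is thus used only as an auxiliary tool to obtain potential decay, not as the source of the contradiction.
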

We prove by contradiction. For this purpose we develop a pseudo-conformal identity through the virial identity of Lemma \ref{loc-virial} and show time decay of potential energy such that
$$
\frac14\int K_1|u|^4\,dx + \frac16\int K_2|u|^6\,dx \lesssim t^{-(2 - b_1)}\;\;\mbox{if}\;\;t \gg 1.
$$
This estimate together with $L^2$ scattering enables us to handle the time decay of quintic term and hence to obtain an estimate
$$
-{\rm Im}\int u(t)\overline{u_+(t)}\,dx \gtrsim \int_1^t\tau^{-(2-b_1)}\,d\tau,
$$
provided $u, u_{\pm}$ are not identically $0$ and $u_\pm$ is sufficiently smooth. The RHS goes to infinity as $t \to \infty$, which contradicts to the uniform boundedness of the LHS.
%%%%%%%%%%%%%%%%%%%%%%%%%%%%%%%%%%%%%%%%%%%%%%%%%%%%%%%%%%%%%%%%%%%%%%%%%%%%%%%%%%%%%%%%%%%%%%%%%%%%%%%%%%%%%%%%%%%%%%%%%%%%%%%%%%%%%%%%%%%%%%%%%%%%%%%%%%%%%%%%%%%%%%%%%%%%%%%%%

%%%%%%%%%%%%%%%%%%%%%%%%%%%%%%%%%%%%%%%%%%%%%%%%%%%%%%%%%%%%%%%%%%%%%%%%%%%%%%%%%%%%%%%%%%%%%%%%%%%%%%%%%%%%%%%%%%%%%%%%%%%%%%%%%%%%%%%%%%%%%%%%%%%%%%%%%%%%%%%%%%%%%%%%%%%%%%%%%
The authors would like to remark that it would be of interest to find out whether the small data scattering holds in $\frac13 \le b_1 < 1$ or not.
%%%%%%%%%%%%%%%%%%%%%%%%%%%%%%%%%%%%%%%%%%%%%%%%%%%%%%%%%%%%%%%%%%%%%%%%%%%%%%%%%%%%%%%%%%%%%%%%%%%%%%%%%%%%%%%%%%%%%%%%%%%%%%%%%%%%%%%%%%%%%%%%%%%%%%%%%%%%%%%%%%%%%%%%%%%%%%%%%

%%%%%%%%%%%%%%%%%%%%%%%%%%%%%%%%%%%%%%%%%%%%%%%%%%%%%%%%%%%%%%%%%%%%%%%%%%%%%%%%%%%%%%%%%%%%%%%%%%%%%%%%%%%%%%%%%%%%%%%%%%%%%%%%%%%%%%%%%%%%%%%%%%%%%%%%%%%%%%%%%%%%%%%%%%%%%%%%%
Our paper is organized as follows. In Section 2 we introduce several basic lemmata on the Sobolev inequalities and Strichartz estimates. Section 3 is devoted to establishing the local theory. The small data scattering is treated in Section 4. In the last section, we discuss the non-scattering results.\\

\noindent\textbf{Notations.}

\noindent$\bullet$ Fractional derivatives: $D^s = (-\Delta)^\frac{s}2 = \mathcal F^{-1}|\xi|^s\mathcal F$, $\Lambda^s = (1- \Delta)^\frac{s}2 = \mathcal F^{-1}(1+|\xi|^2)^\frac s2\mathcal F$ for $s > 0$.\\

\noindent$\bullet$ Sobolev spaces: $ \dot{H}_r^s=D^{-s}L^r,~ \dot{H}^s = \dot{H}_2^s,~ H_r^s = \Lambda^{-s}L^r,~ H^s = H_2^s,~ L^r = L_x^r(\mathbb{R}^2) $ for $s\in \mathbb{R}$ and $1 \leq r\leq \infty$.\\

\noindent$\bullet$ Sobolev spaces on the unit circle: $H_{\theta, p}^1 = \{f \in L_\theta^2(0, 2\pi) : \|f\|_{H_{\theta, p}^1} := \|f\|_{L_\theta^p(0, 2\pi)} + \|\partial_\theta f\|_{L_\theta^p(0, 2\pi)} < \infty\}$, $1 \le p \le \infty$.\\

\noindent$\bullet$ Mixed-normed spaces: For a Banach space $X$, $u \in L_{I}^{q}X$ iff $u(t)\in X $ for a.e. $t \in I$ and $ \|u\|_{L_{I}^{q}X} := \|\| u(t)\|_X\|_{L_{I}^{q}}$. Especially, we denote $L_{I}^{q}L_{x}^{r} = L_{t}^{q} \left( I;L_{x}^{r}(\mathbb{R}^2)  \right),L_{I,x}^{q}=L_{I}^{q}L_{x}^{q}$ and $L_{t}^{q}L_{x}^{r} = L_{\mathbb{R}}^{q}L_{x}^{r}$. We define the mixed-normed space in polar coordinate by
$$
L_\rho^{p_1}H_{\theta, p_2}^1 := \left\{f : \left( \int_{(0,\infty)}\|f\|_{H_{\theta, p_2}^1}^{p_1}\rho\,d\rho \right)^\frac1{p_1} < \infty \right\}.\\
$$

\noindent$\bullet$ As usual different positive constants are denoted by the same letter $C$, if not specified. $A \lesssim B$ means that $A \leq CB$ for some $C > 0$. $A \sim B$ means that $A \lesssim B$ and $ B \lesssim A$.\\

\section{Preliminary lemmas}

We say that a pair $(q,r)$ is {\it admissible} if it satisfies that $2 \leq q, r \leq \infty$, $(q,r) \neq (2,\infty)$, and $\frac{1}{q} + \frac{1}{r}=\frac{1}{2}$.

\begin{lem}[\cite{stri}]\label{lem;stri} Let $(q,r)$ and $(\widetilde{q},\widetilde{r})$ be any \textit{admissible pair}. Then we have
\begin{align*}
\|e^{it\Delta} \varphi\|_{L_{t}^{q}L_{x}^{r}}  &\lesssim  \|\varphi\|_{L_{x}^{2}}, \\
\|\int_{0}^{t} e^{i(t-t')\Delta}F dt' \|_{L_{t}^{q}L_{x}^{r}}  &\lesssim  \|F\|_{L_{t}^{\tilde{q}'}L_{x}^{\tilde{r}'}}.
\end{align*}
%{\bf (2)} If $f$ and $F$ are radially symmetric, then the endpoint is admissible.
\end{lem}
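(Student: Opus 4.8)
The plan is to follow the classical $TT^*$ scheme: I would derive the homogeneous estimate from the pointwise-in-time dispersive bound via the one-dimensional Hardy--Littlewood--Sobolev inequality, and then pass to the retarded (inhomogeneous) estimate by duality together with the Christ--Kiselev lemma. The crucial structural remark is that the admissibility in the excerpt explicitly excludes the double endpoint $(q,r)=(2,\infty)$; since $\frac1q+\frac1r=\frac12$ forces $q=2$ exactly when $r=\infty$, every admissible pair here in fact satisfies $q>2$ (with $q=\infty$, i.e.\ $r=2$, allowed). Thus the sharp endpoint is never reached and I can stay entirely within ordinary Hardy--Littlewood--Sobolev, avoiding the Keel--Tao endpoint machinery.

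First I would establish the dispersive estimate. In two dimensions the propagator has the kernel representation $e^{it\Delta}f(x)=(4\pi i t)^{-1}\int_{\mathbb R^2}e^{i|x-y|^2/(4t)}f(y)\,dy$, from which $\|e^{it\Delta}f\|_{L_x^\infty}\lesssim |t|^{-1}\|f\|_{L_x^1}$ is immediate. Interpolating this with the unitarity $\|e^{it\Delta}f\|_{L_x^2}=\|f\|_{L_x^2}$ gives, for $2\le r\le\infty$, the decay $\|e^{it\Delta}f\|_{L_x^r}\lesssim |t|^{-(1-\frac2r)}\|f\|_{L_x^{r'}}$. By the admissibility relation the exponent equals $1-\frac2r=\frac2q$, which is exactly the rate the time integration will require.

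Next I would run the $TT^*$ argument. Writing $Tf=e^{it\Delta}f$, the homogeneous estimate $\|Tf\|_{L_t^qL_x^r}\lesssim\|f\|_{L_x^2}$ is equivalent, by duality and the semigroup law, to the boundedness of
\[
TT^*F=\int_{\mathbb R}e^{i(t-s)\Delta}F(s)\,ds
\]
from $L_t^{q'}L_x^{r'}$ into $L_t^qL_x^r$. Minkowski's inequality together with the dispersive decay applied pointwise in $t$ yields $\|TT^*F(t)\|_{L_x^r}\lesssim\int_{\mathbb R}|t-s|^{-2/q}\|F(s)\|_{L_x^{r'}}\,ds$, and the one-dimensional Hardy--Littlewood--Sobolev inequality with $\lambda=\frac2q\in(0,1)$ (it is precisely $q>2$ that makes $\lambda<1$) converts this into $\|TT^*F\|_{L_t^qL_x^r}\lesssim\|F\|_{L_t^{q'}L_x^{r'}}$, giving the homogeneous bound. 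The remaining case $q=\infty$ (that is $r=2$) is just mass conservation and requires no time integration.

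Finally, for the inhomogeneous estimate I would first prove the untruncated bound. Composing the homogeneous estimate with its dual $\big\|\int_{\mathbb R}e^{-is\Delta}G(s)\,ds\big\|_{L_x^2}\lesssim\|G\|_{L_t^{\tilde q'}L_x^{\tilde r'}}$ and factoring $\int_{\mathbb R}e^{i(t-s)\Delta}F(s)\,ds=e^{it\Delta}\int_{\mathbb R}e^{-is\Delta}F(s)\,ds$ shows that the full-line operator maps $L_t^{\tilde q'}L_x^{\tilde r'}\to L_t^qL_x^r$. The genuine obstacle is then to replace this full-line integral by the retarded integral $\int_0^t$ of the statement; this is exactly what the Christ--Kiselev lemma supplies, under the hypothesis that the input time exponent is strictly smaller than the output one, i.e.\ $\tilde q'<q$. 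Because excluding $(2,\infty)$ forces $q>2$ and $\tilde q>2$, one has $\tilde q'<2<q$, so the hypothesis is automatically met and the retarded estimate follows. The only point demanding genuine care is precisely this verification that the double endpoint is never attained, which is what keeps the whole argument within the reach of Hardy--Littlewood--Sobolev and Christ--Kiselev rather than the more delicate endpoint theory.
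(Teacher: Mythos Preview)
Your argument is correct. The paper itself does not prove this lemma; it simply states it with a citation to Keel--Tao, so there is no in-paper proof to compare against. Your observation that excluding $(2,\infty)$ forces $q>2$ for every 2D admissible pair, so that Hardy--Littlewood--Sobolev and Christ--Kiselev suffice without the endpoint machinery, is precisely the standard non-endpoint route and works as written.
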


Taking $L_\theta^2$-average, we can extend the range of the Strichartz estimates as follows.
\begin{lem}[See \cite{stri2}, \cite{ghn}, and also \cite{cholee}]\label{lem;stri-extended} Let $6 < r \le \infty$ and let ${\rm supp} \;\widehat \varphi \subset \{\lambda/2 \le |\xi| \le 2\lambda\}$ for any $\lambda > 0$. Then we have
\begin{align*}
\|e^{it\Delta} \varphi\|_{L_{t}^{2}L_{\rho}^{r}L_\theta^2}  \lesssim \lambda^{-\frac 2r} \|\varphi\|_{L_{x}^{2}}.
\end{align*}
\end{lem}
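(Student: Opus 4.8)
The plan is to pass to angular-frequency components and reduce the inequality to a single one-dimensional bound that is uniform over the angular frequency. First, by the parabolic scaling $(t,x)\mapsto(\lambda^2 t,\lambda x)$ one may assume $\lambda=1$: if $\widehat\varphi$ is supported in the annulus $\{\lambda/2\le|\xi|\le 2\lambda\}$, then a change of variables turns the asserted estimate for $\varphi$ into the same estimate (with $\lambda=1$) for a rescaled profile with Fourier support in $\{1/2\le|\xi|\le 2\}$. The factor $\lambda^{-2/r}$ is exactly the scaling weight carried by the spatial norm $L_\rho^r(\rho\,d\rho)$; the $\lambda^{-1}$ coming from $L_t^2$ cancels against the $\lambda$ produced by $\|\varphi\|_{L_x^2}$.

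Next, decompose $\varphi=\sum_{k\in\mathbb Z}e^{ik\theta}g_k(\rho)$. Since $-\Delta$ commutes with rotations, $e^{it\Delta}$ leaves each subspace of fixed angular frequency invariant, so $e^{it\Delta}\varphi=\sum_k e^{ik\theta}h_k(t,\rho)$ with the $h_k$ pairwise orthogonal in $L_\theta^2$. Hence $\|e^{it\Delta}\varphi\|_{L_\theta^2}^2\sim\sum_k|h_k|^2$ pointwise in $(t,\rho)$, and since $r/2\ge 1$ we may apply Minkowski's inequality in $L^{r/2}_\rho$ and then Fubini in $L^2_t$ to obtain
$$\|e^{it\Delta}\varphi\|_{L_t^2L_\rho^rL_\theta^2}\lesssim\Big(\sum_k\|h_k\|_{L_t^2L_\rho^r(\rho\,d\rho)}^2\Big)^{1/2}.$$
Because $\sum_k\|g_k\|_{L^2(\rho\,d\rho)}^2\sim\|\varphi\|_{L_x^2}^2$ by Plancherel in $\theta$, the lemma follows once one proves the modewise estimate $\|h_k\|_{L_t^2L_\rho^r(\rho\,d\rho)}\lesssim\|g_k\|_{L^2(\rho\,d\rho)}$ with a constant independent of $k$.

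For the modewise bound I would use the Hankel representation $h_k(t,\rho)=\int_0^\infty e^{-its^2}J_{|k|}(s\rho)F_k(s)\,s\,ds$, where $F_k$ is the order-$|k|$ Hankel transform of $g_k$, supported in $[1/2,2]$, with $\|F_k\|_{L^2(s\,ds)}=\|g_k\|_{L^2(\rho\,d\rho)}$. Writing $h_k=T_kF_k$, a $TT^*$ argument reduces matters to estimating the kernel $\mathcal K_k(t-t';\rho,\rho')=\int_{1/2}^{2}e^{-i(t-t')s^2}J_{|k|}(s\rho)J_{|k|}(s\rho')\,s\,ds$, which one controls using the uniform Bessel bounds ($|J_\nu(z)|\lesssim z^{-1/2}$ for $z\gtrsim\nu$, rapid decay for $z\ll\nu$, Airy-type behaviour for $z\sim\nu$) together with stationary phase in $s$; a Hardy–Littlewood–Sobolev / Schur estimate then closes the $L_t^2L_\rho^r$ bound (for bounded $|k|$ one may instead invoke the radial Strichartz improvement in two dimensions). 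The hypothesis $r>6$ enters precisely in the extremal regime: a high mode $k\sim N$ whose data concentrates near the Bessel turning point $\rho\sim N$ at the Airy scale $N^{1/3}$ evolves, for $t\gg N^{4/3}$, into an annulus of radius $\sim t$ but width only $\sim tN^{-2/3}$ — the angular momentum suppresses radial spreading — and the corresponding $t$-integral is summable in $k$ exactly when $\tfrac2r<\tfrac13$.

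The main obstacle is therefore this uniform-in-$k$ control of the Bessel-weighted oscillatory integral through the turning-point regime, with no slack available because the temporal exponent $q=2$ is of endpoint type; the rest is bookkeeping. The complete argument is carried out in \cite{stri2, ghn, cholee}.
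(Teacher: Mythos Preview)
The paper does not prove this lemma; it is stated with attribution to \cite{stri2, ghn, cholee} and used as a black box throughout. There is therefore no ``paper's own proof'' to compare against beyond the citations themselves.

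Your outline is essentially the strategy of Tao's original argument in \cite{stri2}: rescale to unit frequency, expand in angular Fourier modes, use Minkowski (valid since $r\ge 2$) together with the $L_t^2$ structure to reduce to a modewise bound uniform in $k$, and then analyze the Hankel--Bessel oscillatory integral via $TT^*$ and stationary phase through the turning-point regime. The scaling bookkeeping, the orthogonality reduction, and the Hankel representation are all correct as stated, and your identification of the threshold $r>6$ with the Airy width $N^{1/3}$ at the turning point $\rho\sim N$ is the right heuristic for why the estimate fails at $r=6$. In short, your sketch unpacks exactly what the cited references contain, and nothing in the paper goes beyond invoking them.
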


The following is the well-known 2D Hardy-Sobolev inequality.
\begin{lem}\label{lem;soin} For any $f \in \dot{H}_p^s (0 < s < \frac2p, 2 < p < \infty)$ we have
$$
\||x|^{-s}f\|_{L_x^p} \lesssim \|f\|_{\dot{H}_p^s}.
$$
\end{lem}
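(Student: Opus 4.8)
\medskip

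\noindent\emph{Proof proposal.} The plan is to deduce the inequality from a weighted Hardy--Littlewood--Sobolev (Stein--Weiss) estimate for the Riesz potential on $\mathbb R^2$. First I would reduce to $f\in\mathcal S(\mathbb R^2)$ by density of Schwartz functions in $\dot H_p^s$ (valid since $0<s<\tfrac2p$, so $\dot H_p^s\hookrightarrow L^{p_s}$ with $\tfrac1{p_s}=\tfrac1p-\tfrac s2$), and set $g:=D^sf$, so that $\|g\|_{L^p}=\|f\|_{\dot H_p^s}$. Since $0<s<\tfrac2p<1<2$, the function $f$ admits the Riesz potential representation
$$
f\;=\;I_sg\;:=\;D^{-s}g\;=\;c_s\int_{\mathbb R^2}|x-y|^{s-2}\,g(y)\,dy,
$$
with $c_s>0$ an explicit constant; thus the assertion is equivalent to the weighted bound $\big\|\,|x|^{-s}I_sg\,\big\|_{L_x^p}\lesssim\|g\|_{L_x^p}$.

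Next I would observe that this is precisely an instance of the Stein--Weiss inequality in dimension $n=2$, with kernel exponent $\lambda=2-s$, source weight $|y|^{-\beta}$ with $\beta=0$, target weight $|x|^{-\alpha}$ with $\alpha=s$, and target exponent $q=p$. Checking the hypotheses: the scaling identity $\tfrac1q=\tfrac1p+\tfrac{\lambda+\alpha+\beta}{2}-1$ holds because $(2-s)+s+0=2$; furthermore $0<\lambda=2-s<2$ (here $p>2$ forces $s<1$), $\alpha+\beta=s\ge0$, $\beta=0<2/p'$, $1<p=q<\infty$, and crucially $\alpha=s<2/p=2/q$, which is exactly the hypothesis of the lemma. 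Hence Stein--Weiss applies and yields the claim.

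Should a self-contained argument be preferred, I would reprove this special case by a dyadic decomposition in $|x|$. Writing $A_j=\{2^j\le|x|<2^{j+1}\}$, I would split $I_sg$ on each $A_j$ according to the annulus $A_k$ containing the integration variable $y$. For the diagonal block $|j-k|\le C$ one has $|x|^{-s}\sim2^{-js}$ on $A_j$, and Hölder on the finite-measure set $A_j$ combined with the Hardy--Littlewood--Sobolev bound $\|I_sh\|_{L^q}\lesssim\|h\|_{L^p}$ with $\tfrac1q=\tfrac1p-\tfrac s2$ (admissible precisely because $s<\tfrac2p$) gives $\|I_s(g\chi_{A_k})\|_{L^p(A_j)}\lesssim2^{js}\|g\|_{L^p(A_k)}$, so the weight cancels the factor $2^{js}$. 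For the off-diagonal blocks the kernel obeys $|x-y|^{s-2}\sim2^{(s-2)\max\{j,k\}}$, and Hölder on $A_k$ yields $\|g\chi_{A_k}\|_{L^1}\lesssim2^{2k/p'}\|g\|_{L^p(A_k)}$; inserting $|x|^{-s}\sim2^{-js}$ one finds the $(j,k)$-contribution to $\big\|\,|x|^{-s}I_sg\,\big\|_{L^p(A_j)}$ is $\lesssim2^{-\delta|j-k|}\|g\|_{L^p(A_k)}$ with $\delta=\min\{\tfrac2p-s,\tfrac2{p'}\}>0$ — this is the step where $s<\tfrac2p$ is used. Summing these geometric series in $j,k$ by Young's inequality for discrete convolution then gives $\big\|\,|x|^{-s}I_sg\,\big\|_{L^p}\lesssim\big\|\{\|g\|_{L^p(A_k)}\}_k\big\|_{\ell^p}\le\|g\|_{L^p}$.

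I expect the only genuinely delicate point to be the sharpness of the range $0<s<\tfrac2p$: the multiplier $|x|^{-s}$ is not a Fourier multiplier, so Plancherel-type arguments are unavailable, and in the dyadic proof it is exactly the constraint $s<\tfrac2p$ — equivalently $|x|^{-sp}\in A_p(\mathbb R^2)$ — that makes both the Hardy--Littlewood--Sobolev exponent admissible and the off-diagonal sums convergent. At the endpoint $s=\tfrac2p$ these contributions fail to sum and the inequality breaks down, so no soft argument can avoid tracking this threshold; everything else in the proof is routine.
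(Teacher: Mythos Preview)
Your argument is correct. The reduction to the Stein--Weiss inequality is clean, and your verification of the parameters is accurate: with $n=2$, $\lambda=2-s$, $\alpha=s$, $\beta=0$, $q=p$, the scaling identity is automatic and the constraint $\alpha<n/q$ is exactly the hypothesis $s<2/p$. The dyadic alternative is also sound; your off-diagonal decay rates $2/p'$ (for $j>k$) and $2/p-s$ (for $k>j$) are computed correctly, and this is indeed where the range $0<s<2/p$ is used.

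The paper takes a different and much terser route: it simply asserts that the inequality follows by interpolation between Theorem~2 of Maz'ya--Shaposhnikova and the critical Sobolev embedding $\dot H_p^{2/p}\hookrightarrow BMO$. So the paper treats the lemma as a black-box consequence of two endpoint results, whereas you prove it directly from the Stein--Weiss inequality (and even offer a fully self-contained dyadic proof). Your approach is more transparent about the mechanism --- one sees explicitly that $|x|^{-s}$ acts as an $A_p$-weight and that the threshold $s=2/p$ is where summability is lost --- and does not require locating the precise statement in \cite{ms}. The paper's approach is shorter on the page but leans more heavily on external references; functionally both are adequate for this auxiliary lemma.
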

This can be done by interpolation between Theorem 2 of \cite{ms} and critical Sobolev inequality $\|f\|_{BMO} \lesssim \|f\|_{\dot H^\frac 2p}$. For the later see  \cite{tr}.

Assuming further angular regularity, we have the space decay Sobolev inequality.
\begin{lem}\label{lem;ang-soin} Let $0 < b \le \frac{1}{2}$. Then for any $f\in H_{\theta}^{1,1}$ there holds
\begin{align*}
\|\left|x \right|^{b}f\|_{L_x^{\infty}} &\lesssim \|f\|_{H_{\theta}^{1,1}}.
\end{align*}
\end{lem}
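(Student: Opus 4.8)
The plan is to reduce the estimate to a weighted Sobolev inequality for \emph{radial} functions and then prove the latter by hand, splitting into the regions $|x|\ge 1$ and $|x|\le 1$. First I would expand $f$ in its angular Fourier series $f(\rho,\theta)=\sum_{k\in\mathbb Z}f_k(\rho)e^{ik\theta}$. Writing $\langle k\rangle=(1+k^2)^{1/2}$ and using Cauchy--Schwarz together with $\sum_k\langle k\rangle^{-2}<\infty$, one gets the pointwise domination $|f(\rho,\theta)|\lesssim g(\rho)$, where $g(\rho):=\big(\sum_k\langle k\rangle^2|f_k(\rho)|^2\big)^{1/2}$. Regarding $g$ as a radial function on $\mathbb R^2$, Parseval's identity in $\theta$ gives $\|g\|_{L^2}^2\sim\|f\|_{L^2}^2+\|\partial_\theta f\|_{L^2}^2$, while the chain rule for the ($1$-Lipschitz) $\ell^2$-norm yields $|\partial_\rho g|\le\big(\sum_k\langle k\rangle^2|f_k'(\rho)|^2\big)^{1/2}$ a.e.; combined with $|\partial_\rho h|\le|\nabla h|$ applied to $h=f$ and $h=\partial_\theta f$, this gives $\|\nabla g\|_{L^2}\lesssim\|\nabla f\|_{L^2}+\|\nabla\partial_\theta f\|_{L^2}$. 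Hence $g\in H^1(\mathbb R^2)$ with $\|g\|_{H^1}\lesssim\|f\|_{\hl^{1,1}}$, and it suffices to prove $\||x|^bg\|_{L^\infty}\lesssim\|g\|_{H^1}$ for radial $g$.

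For the radial estimate, on $|x|=\rho\ge1$ I would use the classical identity $g(\rho)^2=-2\int_\rho^\infty g g'\,ds$ together with $s^{-1}\le\rho^{-1}$ for $s\ge\rho$ and Cauchy--Schwarz in the measure $s\,ds$, obtaining $\rho\,|g(\rho)|^2\lesssim\|g\|_{L^2}\|\nabla g\|_{L^2}\lesssim\|g\|_{H^1}^2$ for \emph{all} $\rho>0$; since $b\le\frac12$, this already gives $\rho^b|g(\rho)|\le\rho^{1/2}|g(\rho)|\lesssim\|g\|_{H^1}$ on $\rho\ge1$, and in particular $|g(1)|\lesssim\|g\|_{H^1}$. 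On $0<\rho\le1$ I would telescope from $\rho$ to $1$ along dyadic annuli $A_j=\{2^{-j}\le s\le 2^{-j+1}\}$: on each $A_j$ one has $\big|\int_{A_j}g'\,ds\big|\le|A_j|^{1/2}\|g'\|_{L^2(ds;A_j)}\lesssim\big(\int_{A_j}|g'|^2 s\,ds\big)^{1/2}$, and summing the $\lesssim 1+\log\frac1\rho$ many contributions by Cauchy--Schwarz in $j$ gives $|g(\rho)|\lesssim|g(1)|+\big(\log\tfrac1\rho\big)^{1/2}\|\nabla g\|_{L^2}\lesssim(1+\log^{1/2}\tfrac1\rho)\|g\|_{H^1}$. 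Because $b>0$, the weight absorbs the logarithm, $\sup_{0<\rho\le1}\rho^b(1+\log^{1/2}\tfrac1\rho)<\infty$, so $\rho^b|g(\rho)|\lesssim\|g\|_{H^1}$ there as well. Combining the two regions finishes the proof.

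The reduction and the $|x|\ge1$ part are routine; the delicate point is the behaviour near the origin. In $\mathbb R^2$ even radial $H^1$ functions may be unbounded (only a logarithmic modulus of growth is available), so the argument must exploit the strict positivity $b>0$: it is precisely the factor $|x|^b$ that tames that logarithmic divergence, which is why I expect the near-origin estimate to be the crux. The upper constraint $b\le\frac12$ is on the other hand forced by the radial Sobolev decay rate $|g(\rho)|\lesssim\rho^{-1/2}\|g\|_{H^1}$ at infinity, so both endpoints of the range $0<b\le\frac12$ are essentially sharp for this method.
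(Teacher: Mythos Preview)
Your argument is correct and is genuinely different from what the paper does. The paper does not give a direct proof at all: for $0<b<\tfrac12$ it simply cites Fang--Wang \cite{fawa}, and for the endpoint $b=\tfrac12$ it quotes the Besov-space inequality $\||x|^{1/2}f\|_{L_x^\infty}\lesssim\|f\|_{\dot B_\theta^{1/2}}$ from \cite{soin2} and then the embedding $H_\theta^{1,1}\hookrightarrow\dot B_\theta^{1/2}$. Your route---passing to the radial majorant $g(\rho)=\big(\sum_k\langle k\rangle^2|f_k(\rho)|^2\big)^{1/2}$ via the angular Fourier expansion, checking $\|g\|_{H^1}\lesssim\|f\|_{H_\theta^{1,1}}$, and then proving the radial weighted bound by the Strauss-type estimate for $\rho\ge1$ and a dyadic telescoping/logarithmic bound near the origin---treats the full range $0<b\le\tfrac12$ uniformly with nothing beyond Cauchy--Schwarz. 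The paper's approach is shorter on the page and, through the scale-invariant Besov formulation, makes the sharpness at $b=\tfrac12$ transparent; your approach is self-contained and elementary, and it makes equally explicit why both endpoints are essentially forced: $b>0$ is exactly what absorbs the $\log^{1/2}(1/\rho)$ blow-up that radial $H^1(\mathbb R^2)$ functions may exhibit at the origin, while $b\le\tfrac12$ is dictated by the $\rho^{-1/2}$ Strauss decay at infinity.
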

If $b < \frac12$, then inequality is shown in \cite{fawa}. The case $b = \frac12$ can be shown by introducing a Besov space and its embedding.
In fact, we know from \cite{soin2} that
$$
\| |x |^{\frac{1}{2}}f\|_{L_x^{\infty}} \lesssim \|f\|_{\dot{B}_{\theta}^{\frac{1}{2}}}.
$$
Here $\dot B_\theta^{\frac12}$ is the Besov space with angular derivative defined by
$$
\dot{B}_{\theta}^{\frac12} := \{f \in  \dot{B}_{2,\,1}^{\frac12, 1} : \|f\|_{\dot{B}_{\theta}^\frac12} := \|f\|_{\dot{B}_{2,\, 1}^\frac12}  +  \|\partial_\theta f\|_{\dot{B}_{2,\, 1}^\frac12} < \infty\}.
$$
Hence Lemma \ref{lem;ang-soin} follow due to the embedding $\hl^{1,1} \hookrightarrow {\dot{B}}_{\theta}^{\frac{1}{2}}$.

\begin{cor}\label{decay}
Let $2 < p < \infty$. Then for any $f\in H_{\theta}^{1,1}$ there holds
\begin{align*}
\|\left|x \right|^{\frac12-\frac1p}f\|_{L_\rho^pL_\theta^{\infty}} &\lesssim \|f\|_{H_{\theta}^{1,1}}.
\end{align*}
\end{cor}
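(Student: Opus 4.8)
The plan is to deduce the corollary from Lemma~\ref{lem;ang-soin} by an elementary one–dimensional interpolation in the radial variable. By a routine density argument (or simply invoking Fubini to make sense of the circle traces $f(\rho,\cdot)$ for a.e.\ $\rho$), I may work with $f$ smooth and compactly supported in $\mathbb R^2\setminus\{0\}$. Passing to polar coordinates $x=(\rho\cos\theta,\rho\sin\theta)$ and setting
\[
g(\rho):=\|f(\rho\cos\theta,\rho\sin\theta)\|_{L_\theta^\infty(0,2\pi)},
\]
one has, since $(\tfrac12-\tfrac1p)p+1=\tfrac p2$,
\[
\big\||x|^{\frac12-\frac1p}f\big\|_{L_\rho^pL_\theta^\infty}=\Big(\int_0^\infty g(\rho)^p\,\rho^{p/2}\,d\rho\Big)^{1/p}=\big\|\rho^{1/2}g(\rho)\big\|_{L^p((0,\infty),\,d\rho)}.
\]
Thus it suffices to bound $\|\rho^{1/2}g\|_{L^p(d\rho)}$ by $\|f\|_{H_\theta^{1,1}}$.

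Next I would establish two endpoint bounds for the function $h(\rho):=\rho^{1/2}g(\rho)$. For the $L^2$ endpoint, apply the one–dimensional Sobolev embedding $H^1(0,2\pi)\hookrightarrow L^\infty(0,2\pi)$ on each circle of fixed radius $\rho$ (the constant is independent of $\rho$ since the circle is fixed) to get $g(\rho)\lesssim\|f(\rho,\cdot)\|_{L_\theta^2}+\|\partial_\theta f(\rho,\cdot)\|_{L_\theta^2}$; squaring, multiplying by $\rho$ and integrating in $\rho$ gives
\[
\|h\|_{L^2(d\rho)}^2=\int_0^\infty\rho\,g(\rho)^2\,d\rho\lesssim\|f\|_{L_x^2}^2+\|\partial_\theta f\|_{L_x^2}^2\le\|f\|_{H_\theta^{1,1}}^2.
\]
For the $L^\infty$ endpoint, Lemma~\ref{lem;ang-soin} with $b=\tfrac12$ yields
\[
\|h\|_{L^\infty(d\rho)}=\sup_{\rho>0}\rho^{1/2}g(\rho)=\big\||x|^{\frac12}f\big\|_{L_x^\infty}\lesssim\|f\|_{H_\theta^{1,1}}.
\]

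Finally, since $\tfrac1p=\tfrac{2/p}{2}+\tfrac{1-2/p}{\infty}$ for $2<p<\infty$, the elementary $L^p$ interpolation inequality applied to $h$ combines the two endpoints:
\[
\|h\|_{L^p(d\rho)}\le\|h\|_{L^2(d\rho)}^{2/p}\,\|h\|_{L^\infty(d\rho)}^{1-2/p}\lesssim\|f\|_{H_\theta^{1,1}},
\]
which is exactly the claimed estimate. There is no genuine obstacle here; the only point deserving care is the arithmetic of the radial weight after the change to polar coordinates, arranged so that the interpolation exponent between the weights $\rho^{0}$ and $\rho^{1/2}$ lands precisely on $\rho^{1/2-1/p}$. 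All the analytic substance is already contained in Lemma~\ref{lem;ang-soin}.
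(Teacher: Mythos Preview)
Your argument is correct and is essentially the same as the paper's: the paper states that the corollary follows by interpolating Lemma~\ref{lem;ang-soin} (the $p=\infty$ endpoint) with the ``trivial estimate'' $\|f\|_{L_\rho^2 L_\theta^\infty}\lesssim\|f\|_{H_\theta^{1,1}}$ (your $p=2$ endpoint, obtained via the circle Sobolev embedding). Your write-up just makes explicit the radial-weight bookkeeping and the elementary $L^p$ interpolation that the paper leaves implicit.
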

This can be readily shown by interpolating the estimates between Lemma \ref{lem;ang-soin} and the trivial estimate $\|f\|_{L_\rho^2L_\theta^\infty} \lesssim \|f\|_{H_\theta^{1,1}}$.

The final lemma is on the relation of angular derivative and radial operators.

\begin{lem}\label{lem;cm}
{\bf (1)} Let $s \geq 0$. Then $\partial_\theta D^sf=D^s\partial_\theta f$ and $\partial_\theta\Lambda^sf=\Lambda^s\partial_\theta f$	\\
{\bf (2)} Let $\psi$ be smooth and radially symmetric. Then $\partial_\theta(\psi*f) = \psi*(\partial_\theta f)$.
\end{lem}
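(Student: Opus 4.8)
The plan is to exploit the fact that $-i\partial_\theta$ is the infinitesimal generator of the rotation group $SO(2)$ acting on $\mathbb R^2$: both assertions are simply the differentiated form of the elementary principle that rotation-invariant operators commute with rotations. Writing $R_\alpha\in SO(2)$ for the rotation by angle $\alpha$ and $f_\alpha(x):=f(R_\alpha x)$, a direct computation with $R_\alpha x=(x_1\cos\alpha-x_2\sin\alpha,\,x_1\sin\alpha+x_2\cos\alpha)$ shows $\partial_\theta f=\frac{d}{d\alpha}\big|_{\alpha=0}f_\alpha$. I would establish both identities first for Schwartz $f$ (and, in (2), Schwartz $\psi$), then pass to general $f$ by density, using the boundedness of $D^s$, $\Lambda^s$ and of convolution by $\psi$ between the Sobolev spaces in play.

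For (1) I would move to the Fourier side. Since Lebesgue measure is rotation invariant one checks $\widehat{f_\alpha}=(\widehat f\,)_\alpha$, and differentiating at $\alpha=0$ gives the intertwining identity $\mathcal F(\partial_\theta f)=\partial_\theta(\mathcal F f)$, where on the right $\partial_\theta=\xi_1\partial_{\xi_2}-\xi_2\partial_{\xi_1}$ now acts in the frequency variable; equivalently one may compute $\widehat{\partial_\theta f}=(\xi_1\partial_{\xi_2}-\xi_2\partial_{\xi_1})\widehat f$ directly from $\widehat{\partial_j f}=i\xi_j\widehat f$ and $\widehat{x_j g}=i\partial_{\xi_j}\widehat g$. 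Because the multipliers $|\xi|^s$ and $(1+|\xi|^2)^{s/2}$ are radial, $\partial_\theta$ annihilates them, so the Leibniz rule gives $\partial_\theta(|\xi|^s\widehat f\,)=|\xi|^s\partial_\theta\widehat f$ and similarly for $(1+|\xi|^2)^{s/2}$. Hence $\mathcal F(\partial_\theta D^s f)=|\xi|^s\partial_\theta\widehat f=\mathcal F(D^s\partial_\theta f)$, and likewise for $\Lambda^s$; inverting the Fourier transform finishes (1).

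For (2) I would argue directly in physical space: substituting $y=R_\alpha z$ in $(\psi*f)(R_\alpha x)=\int\psi(R_\alpha x-y)f(y)\,dy$ and using the rotation invariance of Lebesgue measure together with the radiality $\psi(R_\alpha w)=\psi(w)$ yields $(\psi*f)_\alpha=\psi*f_\alpha$; differentiating at $\alpha=0$ produces $\partial_\theta(\psi*f)=\psi*\partial_\theta f$. (Alternatively (2) follows from (1) since $\widehat\psi$ is radial, but the direct route requires only radiality of $\psi$.) The one genuinely delicate point, and the step I would be most careful about, is the interchange of $\frac{d}{d\alpha}$ with the possibly unbounded operators $D^s$ and $\Lambda^s$; this is exactly where proving the identities for Schwartz functions first and then extending by density pays off. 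Everything else is routine.
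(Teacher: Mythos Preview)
The paper states this lemma without proof, treating it as a standard fact; there is no argument to compare against. Your proposal is correct and supplies exactly the kind of elementary justification one would expect: the Fourier-side computation for (1) using that $\partial_\theta$ intertwines with $\mathcal F$ and annihilates radial multipliers, and the change-of-variables argument for (2), are both valid, and the density extension is the right way to handle general $f$.
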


\section{Well-posedness and blowup criterion}\label{sec;lwp}

Let $I_T =[-T,T]$. Let us define a complete metric spaces $X_\theta^{n, 1}(T,\delta),\;\; n = 1, 2$ with metric $d^{n, 1}$ by
$$
X_\theta^{n,1} (T,\delta ) := \{ u \in ( C\cap L^{\infty})(I_{T}; H_{\theta}^{n,1} ) : \|u \|_{L_{I_{T}}^{\infty}H_{\theta}^{n,1}} \leq \delta \}, \;d^{n,1}(u,v):= \|u - v\|_{L_{I_T}^{\infty}H_\theta^{n,1}}.
$$
Let $\mathcal{H}(u)$ be defined by $\mathcal{H}(u)=e^{it\Delta}\varphi + N(u)$ where $N(u)= -i \int e^{i(t-t')\Delta}\left[K_1Q_1(u)+K_2Q_2(u)\right]dt'$. We will show that  $\mathcal{H}$ is a contraction map on $X_\theta^{n,1}(T,\delta)$. By $N_{l}^{1,1}$ and $N_l^2$ for $l = 1, 2$ we denote the derivatives of Duhamel's part as follow:
\begin{align*}
N_{l}^{j, k}&= -i \partial^j {\partial_\theta}^k \int_{0}^{t}e^{i(t-t')\Delta}\left[K_lQ_l(u)\right]dt' ( j, k = 0, 1),\\
N_{l}^2&= -i \partial^2 \int_{0}^{t}e^{i(t-t')\Delta}\left[K_lQ_l(u)\right]dt'.
\end{align*}
From Leibniz rule and Lemma \ref{lem;cm} it follows that
\begin{align*}
N_{l}^{1, 1} &= -i \int_{0}^{t}e^{i(t-t')\Delta}\left[(\partial {\partial_\theta} K_l)Q_l(u)+ \partial_\theta K_l\partial Q_l(u)+(\partial K_l) \partial_\theta Q_l(u) + K_l\partial {\partial_\theta} Q_l(u)\right]dt',\\
N_l^2 &= -i \int_{0}^{t}e^{i(t-t')\Delta}\left[(\partial^2K_l)Q_l(u) + 2(\partial K_l)(\partial Q_l(u))+ K_l\partial^2Q_l(u)\right]dt'.
\end{align*}

We only consider three cases:  $ 0 < b_1 < \frac{1}{2}, 0 < b_2 < \frac{3}{2}$; $b_1 = 0$ and $b_2 = 0$;  $b_1=\frac12$ and $b_2=\frac32$. From these one can readily treat the remaining cases.

\subsection{Contraction on $X_\theta^{1,1}(T, \delta)$}
\subsubsection{Case: $ 0 < b_1 < \frac{1}{2}, 0 < b_2 < \frac{3}{2}$}\label{subs;1}
Given $\delta$, from Lemmas \ref{lem;stri}, \ref{lem;ang-soin}, and \ref{lem;ang-soin} we can obtain that for any $u\in X_\theta^{1,1}\left (T,\delta \right)$ and for $j, k = 0, 1$
$\|N_1^{j, k}\|_{L_{I_T}^{\infty}L_x^2} \lesssim (T+T^{\frac{4}{3}})\|u\|_{H_\theta^{1,1}}^3$ and $\|N_2^{j, k}\|_{L_{I_T}^{\infty}L_x^2} \lesssim (T+T^{\frac{4}{3}})\|u\|_{H_\theta^{1,1}}^5$. In fact, using the bound \eqref{kl-up}, we deduce that
$$
|K_l(x)| + |\partial_\theta K_l| \lesssim |x|^{b_l},\quad |\partial K_l(x)| + |\partial\partial_\theta K_l(x)| \lesssim |x|^{b_l-1}.
$$
We consider only two significant parts $|\partial K_l||u|^{2l} |\partial_\theta u|$ and $|K_l||u|^{l+1}|\partial u||\partial_\theta u|$. Choose the admissible pair $(\widetilde q, \widetilde r) = (4, 4)$. Then we have that for $0 < \varepsilon < \min(\frac12-b_1, \frac32-b_2)$
\begin{align*}
&\||x|^{b_1-1}|u|^2 |{\partial_\theta}u| \|_{L_{I_T}^{\frac{4}{3}}L_x^{\frac{4}{3}}} + \||x|^{b_1}|u||\partial u| |{\partial_\theta}u| \|_{L_{I_T}^{\frac{4}{3}}L_x^{\frac{4}{3}}}\\
&\qquad \lesssim T^{\frac{3}{4}}(\||x|^{b_1+\varepsilon}|u|\|_{L_{I_T}^{\infty}L_x^2}\||x|^{-(1-\varepsilon)}u\|_{L_{I_T}^{\infty}L_x^2}\|{\partial_\theta}u\|_{L_{I_T}^{\infty}L_x^{4}} +  \|\left|x\right|^{b_1}\left|u\right|\|_{L_{I_T}^{\infty}L_x^{\infty}}\|\partial u\|_{L_{I_T}^{\infty}L_x^{2}}\|{\partial_\theta}u\|_{L_{I_T}^{\infty}L_x^{4}})\\
&\qquad \lesssim T^{\frac{3}{4}}||u||_{L_{I_T}^{\infty}H_\theta^{1,1}}^3
\end{align*}
and
 \begin{align*}
 &\| \left|x\right|^{b_2-1}|u|^4\left|{\partial_\theta}u\right| \|_{L_{I_T}^{\frac{4}{3}}L_x^{\frac{4}{3}}} + \| \left|x\right|^{b_2}|u|^3|\partial u|\left|{\partial_\theta}u\right| \|_{L_{I_T}^{\frac{4}{3}}L_x^{\frac{4}{3}}}\\
 &\qquad \lesssim T^{\frac{3}{4}}(\| \left|x\right|^{\frac{b_2+\varepsilon}{3}}\left|u\right|\|_{L_{I_T}^{\infty}L_x^{r}}^3\||x|^{-(1-\varepsilon)} u \|_{L_{I_T}^{\infty}L_x^{2}} \|{\partial_\theta}u\|_{L_{I_T}^{\infty}L_x^{4}} + \| \left|x\right|^{\frac{b_2}{3}}\left|u\right| \|_{L_{I_T}^{\infty}L_x^{\infty}}^3 \|\partial u \|_{L_{I_T}^{\infty}L_x^{2}} \|{\partial_\theta}u\|_{L_{I_T}^{\infty}L_x^{4}})\\
 &\qquad \lesssim T^{\frac{3}{4}}||u||_{L_{I_T}^{\infty}H_\theta^{1,1}}^5.
  \end{align*}
 Therefore we get
\begin{align*}
\|\mathcal{H}(u)\|_{L_{I_T}^{\infty}H_\theta^{1,1}} \le \|\varphi\|_{H_\theta^{1,1}} + C\left(T+T^{\frac{3}{4}}\right)\left(\delta^3 + \delta^5\right).
\end{align*}
Set $\delta \ge 2||\varphi||_{H_\theta^{1,1}}$ and choose $T$ small enough that $C\left(T+T^{\frac{3}{4}}\right)\left(\delta^3 + \delta^5\right) \le \frac\delta2$. Then it can be shown that $\mathcal{H}$ the self-mapping on $X_\theta^{1,1}(T,\delta)$. Since the nonlinear terms are algebraic, we can carry out the estimates for $d^{1, 1}$ with a slight change of terms and a smaller $T$ as follows:
\begin{align*}
d^{1,1}\left(\mathcal{H}(u),\mathcal{H}(v) \right) \leq C(1+\|u\|_{L_{I_T}^{\infty}H_\theta^{1,1}} + \|v\|_{L_{I_T}^{\infty}H_\theta^{1,1}})^4 \|u-v\|_{L_{I_T}^{\infty}H_\theta^{1,1}} \leq \frac{1}{2}d^{1,1}(u,v).
\end{align*}
The local well-posedness in $H_\theta^{1, 1}$ is now clear from the contraction.

\subsubsection{Case: $b_1=0$ and $b_2=0$ }

For the proof we only consider the term $|x|^{-1}|u|^2|{\partial_\theta}u|$. Taking $(\widetilde q, \widetilde r) = (\frac83, 8)$ we have that
$$
\||x|^{-1}|u|^2|{\partial_\theta}u|\|_{L_{I_T}^{\frac85}L_x^{\frac87}} \lesssim T^{\frac58}\||x|^{-\frac23}u\|_{L_{t}^{\infty}L_x^2}^{\frac32}\|u\|_{L_{t}^{\infty}H_\theta^{1,1}}^{\frac12}\|{\partial_\theta}u\|_{L_{t}^{\infty}H^1}.
$$
Then we obtain
$$
\|N_1^{1,1}\|_{L_{I_T}^{\infty}L_x^2} \lesssim (T^{\frac38}+T^{\frac58})\|u\|_{H_\theta^{1,1}}^3 \lesssim (T^{\frac38}+T^{\frac58})\delta^3.
$$
By the same way, we estimate
$$
\|N_2^{1,1}\|_{L_{I_T}^{\infty}L_x^2} \lesssim (T^{\frac38}+T^{\frac58})\|u\|_{H_\theta^{1,1}}^5 \lesssim (T^{\frac38}+T^{\frac58})\delta^5.
$$
This gives us the local well-posedness.

\begin{rem}\label{lwp-in-h1}
We can treat the case $b_1 = b_2 = 0$ as usual energy-subcritical problem. We can show the local well-posedness in $H^1$. In fact, the main obstacle would be the terms $|x|^{-1}|u|^3$ and $|x|^{-1}|u|^5$ which can be estimated as follows:
\begin{align*}
\||x|^{-1}|u|^3\|_{L_{I_T}^{\frac43}L_x^{\frac43}} &\lesssim \||x|^{-\frac13}u\|_{L_{t}^4L_x^4}^3 \le \|D^\frac13 u\|_{L_T^4L_x^4}^3 \lesssim T^{\frac34}\|u\|_{L_t^\infty H^1}^3,\\
\||x|^{-1}|u|^5\|_{L_{I_T}^{\frac32}L_x^{\frac65}} &\lesssim T^\frac23\||x|^{-\frac15}u\|_{L_{t}^\infty L_x^6}^5 \le T^\frac23\|D^\frac15 u\|_{L_T^\infty L_x^6}^5 \lesssim T^{\frac23}\|u\|_{L_t^\infty H^1}^5.
\end{align*}
These give us the desired local well-posedness in $H^1$.
\end{rem}

\subsubsection{Case: $b_1 = \frac12$ and $b_2 = \frac32$  }
As in the proof of Section \ref{subs;1} we only consider the terms $|x|^{-\frac12}|u|^2|{\partial_\theta}u|$ and $|x|^{\frac12}|u||{\partial_\theta}u||\partial u|$.
Choose $(\widetilde q, \widetilde r) = (4, 4)$. Then for $0 < \varepsilon < \frac12$ we have from Lemma \ref{ang-sobo} that
\begin{align*}
&\||x|^{-\frac12}|u|^2|{\partial_\theta}u|\|_{L_{I_T}^{\frac43}L_{x}^{\frac43}} + \||x|^{\frac12}|u||{\partial_\theta}u||\partial u|\|_{L_{I_T}^{\frac43}L_{x}^{\frac43}}\\
&\qquad \lesssim T^{\frac34}( \||x|^{\frac12-\varepsilon}u\|_{L_T^\infty L_x^\infty}\||x|^{-1+\varepsilon}u\|_{L_T^\infty L_x^2}\|\partial_\theta u\|_{L_T^\infty L_x^4} +  \||x|^{\frac12}u\|_{L_t^{\infty}L_x^{\infty}}\|\partial u\|_{L_t^{\infty}L_x^2}\|{\partial_\theta}u\|_{L_t^{\infty}L_x^4})\\
& \lesssim T^{\frac34}\|u\|_{H_\theta^{1, 1}}\| u\|_{L_T^{\infty}H^1}\|{\partial_\theta}u\|_{L_T^{\infty}L_x^4}\\
& \lesssim T^{\frac34}\|u\|_{\hl^{1,1}}^3 \lesssim T^{\frac34}\delta^3.
\end{align*}
In particular, $||N_1^{1,1}||_{L_{I_T}^{\infty}L_x^2} \lesssim (T+T^{\frac34})\delta^3$. Similarly, we can get $||N_2^{1,1}||_{L_{I_T}^{\infty}L_x^2} \lesssim (T+T^{\frac34})\delta^5$.  This completes the proof of part (1) of Theorem \ref{thm;lwp}.

\subsubsection{Contraction on $X_\theta^{2, 1}(T, \delta)$}
Here we show that \eqref{eq:sch} is well-posed in $H^2$. We have only to estimate $N_1^2$ and $N_2^2$.
Let us choose $ 0 < \varepsilon < \frac{1}{4}\min(b_1, b_2)$. Then we first get
\begin{align*}
\|N_{1}^{2}\|_{L_{I_{T}}^{\infty}L_{x}^{2}} &\lesssim \|\left|x \right|^{b_1 -2}|u|^3\|_{L_{I_{T}}^{\frac{2}{1 + 2\varepsilon}}L_{x}^{\frac{1}{1-\varepsilon}}}
+ ||\left|x \right|^{b_1 -1}|u|^2\left|\partial u\right|||_{L_{I_{T}}^{\frac{4}{3}}L_{x}^{\frac{4}{3}}}\\
&\qquad + \|\left|x \right|^{b_1}|u|\left|\partial u\right|^2\|_{L_{I_{T}}^{1}L_{x}^{2}} + \|\left|x \right|^{b_1}|u|^2 \left|\partial^2 u\right|\|_{L_{I_{T}}^{1}L_{x}^{2}}\\
&\lesssim T^{\frac{1}{2}+\varepsilon} \|\left|x \right|^{\frac{b_1}{2(1+2\varepsilon)}}\left| u \right| \|_{L_{I_{T}}^{\infty} L_{x}^{\infty}}^{1+2\varepsilon}\|\left|x \right|^{-\frac{1-\frac{b_1}{4}}{1-\varepsilon}}\left|u\right|\|_{L_{I_{T}}^{\infty}L_{x}^{2}}^{2(1-\varepsilon)}\\
&\qquad + T^{\frac{3}{4}} \|\left|x \right|^{\frac{b_1}{2}}\left|u\right| \|_{L_{I_{T}}^{\infty}L_{x}^{\infty}}\|\left|x \right|^{-(1- \frac{b_1}{2})}\left| u\right| \|_{L_{I_{T}}^{\infty}L_{x}^{2}}\|\partial u\|_{L_{I_{T}}^{\infty}L_{x}^{4}}\\
&\qquad + T \left( \|\left|x \right|^{b_1}\left| u\right| \|_{L_{I_{T}}^{\infty}L_{x}^{\infty}}\|\partial u\|_{L_{I_{T}}^{\infty}L_{x}^{4}}^{2}
+ \|\left|x \right|^{\frac{b_1}{2}}\left| u\right| \|_{L_{I_{T}}^{\infty}L_{x}^{\infty}}^{2}\|\partial^{2} u \|_{L_{I_{T}}^{\infty}L_{x}^{2}}\right)\\
&\lesssim T^{\frac{1}{2}+\varepsilon} \|u \|_{L_{I_{T}}^{\infty} H_{\theta}^{1,1}}^{1+2\varepsilon}\|u\|_{L_{I_{T}}^{\infty}L_{x}^{2}}^{2(1-\varepsilon)}
+ T^{\frac{3}{4}} \|u \|_{L_{I_{T}}^{\infty}H_{\theta}^{1,1}}^{2}\|u \|_{L_{I_{T}}^{\infty}H^{2}}\\
&\qquad +T \left( \|u \|_{L_{I_{T}}^{\infty}H_{\theta}^{1,1}}\|u \|_{L_{I_{T}}^{\infty}H^{2}}^{2}
+ \|u \|_{L_{I_{T}}^{\infty}H_{\theta}^{1,1}}^{2}\|u \|_{L_{I_{T}}^{\infty}H^{2}} \right)\\
&\lesssim \left( T^{\frac{1}{2}+\varepsilon} + T^{\frac{3}{4}}+T \right) \delta ^3.
\end{align*}
By the same way we can estimate
\begin{align*}
||N_{2}^{2}||_{L_{I_{T}}^{\infty}L_{x}^{2}} \lesssim \left( T^{\frac{1}{2}+\varepsilon} + T^{\frac{3}{4}}+T \right) \delta ^5.
\end{align*}
This concludes the proof of local well-posedness in $H^2$.

\subsubsection{Mass and Energy conservation}
We have just shown that for any $\varphi\in H_\theta^{2,1}$ the solution $u\in C(I_*;H_\theta^{2,1})$. So we first assume that $\varphi\in H_\theta^{2,1}$. Then the map $g(u) = K_1Q_1(u)+K_2Q_2(u)\in C(H_\theta^{2,1},L_x^2)$. Hence if $u\in C(I_T;H_\theta^{2,1})$ for any $I_T \subset I_*$, then $u_t\in C(I_T;L_x^2)$. This implies the mass and energy conservation. For this see \cite{caze}. Now, using the continuous dependency of solution on the initial data, the mass and energy conservation in $H_\theta^{1,1}$ follows.

\begin{rem}\label{h1gwp}
If $b_1 = b_2 = 0$ and $K_l \ge 0$ (defocusing), then by energy conservation leads us to the uniform boundedness of $H^1$ norm of solutions and global well-posedness.
\end{rem}

\subsection{Blowup criterion}
We will show the finite time blowup via localized virial argument (\cite{mer2}).

\begin{lem}\label{loc-virial}
Let $\varphi \in  H_\theta^{1, 1}$ and $x\varphi \in L_{x}^{2}$, and let $u$ be the solution of (\ref{eq:sch}) in $C( [-T,T];H_{\theta}^{1,1} )$. Then $xu \in C( [-T,T];L_{x}^{2} )$ and it satisfies that
\begin{eqnarray}\label{dilation}
% \nonumber % Remove numbering (before each equation)
\|\left|x\right|u(t)\|_{L_{x}^{2}}^{2} = \|\left|x\right|\varphi \|_{L_{x}^{2}}^{2} + 4 \int_0^t\mathcal{A}(t)dt,
	\end{eqnarray}
where $\mathcal{A}(t)= {\rm Im} \int \bar{u}(t) \left(x\cdot \nabla\right)u(t)dx$.
\begin{eqnarray}\label{virial}
% \nonumber % Remove numbering (before each equation)
  \frac{d}{dt}\mathcal{A}(t)= 4E\left( u(t)\right) -\frac12\int(x \cdot \nabla K_1)|u|^4 dx + \frac13\int \left(2K_2 - x\cdot \nabla K_2 \right)|u|^6 dx.
\end{eqnarray}
\end{lem}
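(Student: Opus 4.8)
The plan is to derive the two identities from the standard virial/Morawetz machinery, treating the weight $|x|^2$ as a (regularized) multiplier and then justifying the formal computation by the density of $H_\theta^{2,1}$ in $H_\theta^{1,1}$ and the continuous dependence established in part (1) of Proposition \ref{thm;lwp}. First I would assume $\varphi\in H_\theta^{2,1}$, so that $u\in C(I_*;H_\theta^{2,1})$ and $u_t\in C(I_*;L_x^2)$; this regularity makes every integration by parts below legitimate. To control the weight I would introduce a cutoff $\chi_R(x) = \chi(x/R)$ with $\chi$ smooth, $\chi(x)=|x|^2$ for $|x|\le 1$ and bounded for large $|x|$, set $V_R(t):=\int \chi_R(x)|u(t,x)|^2\,dx$, and compute $\frac{d}{dt}V_R$ and $\frac{d^2}{dt^2}V_R$ using the equation; then let $R\to\infty$, using $x\varphi\in L_x^2$ together with a Gronwall-type bound on $\||x|u(t)\|_{L_x^2}$ to pass to the limit and recover $xu\in C(I_*;L_x^2)$.

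For the first-order identity \eqref{dilation}: differentiating $V_R$ and using $iu_t = -\Delta u + K_1Q_1(u)+K_2Q_2(u)$ gives $\frac{d}{dt}V_R = 2\,{\rm Im}\int \bar u\,\nabla\chi_R\cdot\nabla u\,dx$ (the nonlinear terms drop out because $\chi_R$ is real and $\bar u Q_l(u)$ is real, so $\int\chi_R\,{\rm Im}(\bar u Q_l(u))=0$). Since $\nabla\chi_R(x)=2x$ on $|x|\le R$, the limit is $4\,{\rm Im}\int\bar u\,(x\cdot\nabla)u\,dx = 4\mathcal A(t)$, and integrating in $t$ yields \eqref{dilation}. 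For the second-order identity I would differentiate $\mathcal A(t)={\rm Im}\int\bar u\,(x\cdot\nabla)u\,dx$ (again first for the cutoff version $\mathcal A_R = \frac12{\rm Im}\int\bar u\,\nabla\chi_R\cdot\nabla u$, then pass to the limit). Using the equation and integrating by parts, the Laplacian term produces $2\|\nabla u\|_{L_x^2}^2$ (this is the content of the classical computation $\frac{d}{dt}{\rm Im}\int\bar u\,(x\cdot\nabla)u = 2\|\nabla u\|_{L_x^2}^2 + (\text{nonlinear})$ in two dimensions), and the nonlinear term $K_lQ_l(u)$ contributes, after integrating by parts in $x$ and using $\bar u\,Q_l(u)=|u|^{2l+2}$,
\begin{align*}
-{\rm Re}\int (x\cdot\nabla\bar u)\,K_lQ_l(u)\,dx - {\rm Re}\int \bar u\,(x\cdot\nabla)(K_lQ_l(u))\,dx = -\Big(1 + \tfrac{1}{l+1}\Big)\int K_l|u|^{2l+2}\,dx - \tfrac{1}{l+1}\int (x\cdot\nabla K_l)|u|^{2l+2}\,dx,
\end{align*}
where I used $\int \bar u\,(x\cdot\nabla)Q_l(u)\,dx$ together with $(x\cdot\nabla)|u|^{2l+2} = \tfrac{2l+2}{2}\big(\bar u(x\cdot\nabla)u + u(x\cdot\nabla)\bar u\big)$ to express everything through $\int K_l|u|^{2l+2}$ and $\int(x\cdot\nabla K_l)|u|^{2l+2}$. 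Assembling with the definition $E(u)=\tfrac12\|\nabla u\|_{L_x^2}^2+\tfrac14\int K_1|u|^4+\tfrac16\int K_2|u|^6$, one checks the $l=1$ terms combine to $-\tfrac12\int(x\cdot\nabla K_1)|u|^4$ and the $l=2$ terms to $\tfrac13\int(2K_2 - x\cdot\nabla K_2)|u|^6$ after accounting for the $\tfrac{3}{2}\int K_2|u|^6$ coming from the $4E(u)$ expansion; this bookkeeping gives exactly \eqref{virial}.

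The main obstacle is not the formal identity but the rigorous limit $R\to\infty$ and the a priori bound needed to place $xu(t)$ in $L_x^2$. Concretely, from the cutoff identities one gets $|\frac{d}{dt}V_R(t)|\lesssim \|\sqrt{\chi_R}\,u\|_{L_x^2}\|\nabla u\|_{L_x^2}$, i.e. $\frac{d}{dt}V_R(t)\lesssim V_R(t)^{1/2}\sup_{I_T}\|\nabla u\|_{L_x^2}$; since $u\in L^\infty_{I_T}H^1$, Gronwall gives $V_R(t)\le C(\|x\varphi\|_{L_x^2},\|u\|_{L^\infty_{I_T}H^1},T)$ uniformly in $R$, and then Fatou yields $\||x|u(t)\|_{L_x^2}^2 = \lim_{R}V_R(t) < \infty$ with the bound, hence $xu\in C(I_T;L_x^2)$ (continuity following from the integral identity \eqref{dilation} once the RHS is known finite). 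For the general case $\varphi\in H_\theta^{1,1}$ with $x\varphi\in L_x^2$, I would approximate by $\varphi_n\in H_\theta^{2,1}$ with $\varphi_n\to\varphi$ in $H_\theta^{1,1}$ and $x\varphi_n\to x\varphi$ in $L_x^2$, apply the identities to $u_n$, and pass to the limit using the continuous dependence from Proposition \ref{thm;lwp}(1) together with the uniform-in-$n$ weight bound just derived; the growth hypothesis \eqref{kl-up} on $K_l$ (namely $|x\cdot\nabla K_l|\lesssim |x|^{b_l}$) ensures the potential-energy integrals in \eqref{virial} are finite and stable under this limit, since $|x|^{b_l}|u|^{2l+2}$ is integrable by the angular Sobolev bound \eqref{ang-sobo} and $b_l\le l-\tfrac23<\tfrac12,\tfrac32$ respectively.
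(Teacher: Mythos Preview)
Your approach is correct and essentially the same as the paper's: a localized virial computation with a smooth cutoff of $|x|^2$, passage to the limit, and reduction to regular data by the $H_\theta^{2,1}$ density/continuous-dependence from Proposition~\ref{thm;lwp}. If anything you give more detail than the paper, which omits the Gronwall step showing the uniform bound on $\||x|u(t)\|_{L_x^2}$.

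Two small slips worth fixing. First, the cutoff as written does not recover $|x|^2$: with $\chi_R(x)=\chi(x/R)$ you get $\chi_R(x)=|x|^2/R^2$ on $|x|\le R$, so you need $\chi_R(x)=R^2\chi(x/R)$ (exactly the paper's $a_r$). Second, the displayed nonlinear contribution is off: the correct outcome of the integration by parts in two dimensions is
\[
\frac{2l}{l+1}\int K_l|u|^{2l+2}\,dx-\frac{1}{l+1}\int(x\cdot\nabla K_l)|u|^{2l+2}\,dx,
\]
not $-(1+\tfrac{1}{l+1})\int K_l|u|^{2l+2}-\tfrac{1}{l+1}\int(x\cdot\nabla K_l)|u|^{2l+2}$; with the correct coefficients the $l=1$ and $l=2$ terms indeed combine with $4E(u)$ to give \eqref{virial} as you assert.
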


\begin{proof} By the continuous dependency on initial data we may assume that $H_\theta^{2, 1}$.
Let $a(x)\in C_0^\infty(\mathbb{R}^2)$  and $a_r(x)$ be as follows:
\begin{align*}a(x)&:=\left\{ \begin{array}{cc}
                  |x|^2 & (|x|\leq 1) \\
                  0 & (|x|\geq 2)
                \end{array}  \right.,\quad
a_r(x):=r^2a(\frac{x}{r}).
\end{align*}
Set $D_r(t) = \int a_r|u(t)|^2 dx$. Then by direct calculation we have
$$
\frac{d}{dt}D_r = -2 \textrm{Im}\int a_r \Delta u \bar{u}\,dx
$$
and
\begin{align*}
\frac{d^2}{dt^2}D_r &= 2 \textrm{Im}\int \left[-\Delta a_r u_t \bar{u}-\left(\nabla a_r \cdot \nabla\bar{u}\right)u_t +\left(\nabla a_r \cdot \nabla u \right)\bar{u_t}\right] dx\\
&= 2 \int \Delta a_r|\nabla u|^2 dx - \int \Delta^2a_r|u|^2 dx +2 \int\Delta a_r\left(K_1|u|^4+K_2|u|^6\right)dx\\
&\qquad + \textrm{Re}\int(\nabla^2 a_r \cdot \nabla\bar{u})\nabla u dx -\frac12 \int \Delta a_r |\nabla u|^2dx\\
&\qquad - \int\Delta a_r K_1|u|^4 dx - \frac23\int\Delta a_r K_2|u|^6 dx - \int\left(\nabla a_r \cdot \nabla K_1\right)|u|^4 dx -\frac23\int\left(\nabla a_r \cdot \nabla K_2\right)|u|^6 dx\\
&= \int_{|x|\leq r}U(x)dx + \int_{r \leq |x| \leq 2r} U(x)dx,
\end{align*}
where
\begin{align*}
U(x) &= 2\Delta a_r|\nabla u|^2  -  \Delta^2a_r|u|^2  +2 \Delta a_r\left(K_1|u|^4+K_2|u|^6\right)+  \textrm{Re}(\nabla^2 a_r \cdot \nabla\bar{u})\nabla u  -\frac12 \Delta a_r |\nabla u|^2\\
&\qquad - \Delta a_r K_1|u|^4 dx - \frac23\Delta a_r K_2|u|^6  - \left(\nabla a_r \cdot \nabla K_1\right)|u|^4 - \frac23\left(\nabla a_r \cdot \nabla K_2\right)|u|^6.
\end{align*}

Since $\int_{r \leq |x| \leq 2r}U(x)dx \rightarrow 0$ as $r \rightarrow \infty$, after integrating over $[0, t]$, by taking limit $r \rightarrow \infty$ and then derivatives, we obtain \eqref{dilation} and \eqref{virial}.
\end{proof}

Now from Lemma \ref{loc-virial} it follows that
\begin{align*}
&\frac{d^2}{dt^2}\||x|u(t)\|_{L_x^2}^{2} \\
&\quad = 8 \int|\nabla u|^2 dx + 4 \int  K_1|u|^4 dx + \frac{16}3\int K_2|u|^6 dx - 2\int\left(x \cdot \nabla K_1\right)|u|^4 dx -\frac43\left(x \cdot \nabla K_2\right)|u|^6 dx\\
&\quad = 16E\left(u(t)\right) -2 \int\left(x \cdot \nabla K_1\right)|u|^4dx + \frac43\int\left(2K_2 -x \cdot \nabla K_2 \right)|u|^6dx\\
&\quad \leq 16E\left(u(t)\right) + 8\alpha\left(\frac14 \int K_1|u|^4dx + \frac16\int K_2|u|^6 dx\right)\\
&\quad \leq (16 + 8\alpha)E(\varphi).
\end{align*}
In particular,
\begin{eqnarray*}
\||x|u(t)\|_{L_x^2}^{2} \le (8 + 4\alpha)t^2 E(\varphi)+ 4t \textrm{Im} \int\left(x\cdot\nabla\varphi\right)dx+\int|x|^2|\varphi|^2 dx.
\end{eqnarray*}
Since $E(\varphi)< 0$, the last inequality gives us the finite time blowup.

\subsection{Radial global well-posedness}
If $K_l \ge 0$, then \eqref{eq:sch} is defocusing. The uniform bound of $\|u\|_{H_{rad}^1}$ follows from the energy conservation and then we get the global well-posedness.

Let us consider the case of no sign condition for $K_l$. For this we need the smallness of $H_{rad}^1$ norm because \eqref{eq:sch} has mass-supercritical but energy-subcritical nature due to the quintic term.
Under radial symmetry the following holds from an interpolation.
\begin{lem}[Proposition 3 of \cite{soin2}]\label{radial-soin} Let $ 0 < b \le \frac12$. Then for any $f \in H_{rad}^1$ we have
$$
\||x|^{b}f\|_{L_x^{\infty}} \lesssim \|f\|_{L_x^2}^b\|\nabla f\|_{L_x^2}^{1-b}.
$$
\end{lem}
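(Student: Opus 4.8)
The plan is to derive the homogeneous (scale‑invariant) estimate from the inhomogeneous one that is already at our disposal, simply by rescaling. Since $f$ is radial we have $\partial_\theta f \equiv 0$, so $\|f\|_{\hl^{1,1}} = \|(f,\partial_\theta f)\|_{H^1} = \|f\|_{H^1}$, and Lemma \ref{lem;ang-soin} applied to $f$ gives, for $0 < b \le \tfrac12$,
$$
\||x|^b f\|_{L_x^\infty} \lesssim \|f\|_{H^1} \lesssim \|f\|_{L_x^2} + \|\nabla f\|_{L_x^2}.
$$
Now apply this to the dilate $f_\lambda := f(\lambda\,\cdot)$, $\lambda > 0$, which is again radial in $H_{rad}^1$. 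Using $\||x|^b f_\lambda\|_{L_x^\infty} = \lambda^{-b}\||x|^b f\|_{L_x^\infty}$, $\|f_\lambda\|_{L_x^2} = \lambda^{-1}\|f\|_{L_x^2}$ and $\|\nabla f_\lambda\|_{L_x^2} = \|\nabla f\|_{L_x^2}$ (the last because $\dot H^1$ is scaling‑invariant in two dimensions), we obtain
$$
\||x|^b f\|_{L_x^\infty} \lesssim \lambda^{b-1}\|f\|_{L_x^2} + \lambda^{b}\|\nabla f\|_{L_x^2} \qquad (\lambda > 0).
$$
Choosing $\lambda = \|f\|_{L_x^2}/\|\nabla f\|_{L_x^2}$, which balances the two terms, yields $\||x|^b f\|_{L_x^\infty} \lesssim \|f\|_{L_x^2}^b\|\nabla f\|_{L_x^2}^{1-b}$, as claimed. (If $\|\nabla f\|_{L_x^2} = 0$ then $f \equiv 0$ and there is nothing to prove.)

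In this route the hypothesis $b > 0$ enters only through Lemma \ref{lem;ang-soin}; the endpoint $b = 0$ must indeed be excluded, since $H^1 \not\hookrightarrow L_x^\infty$ in two dimensions. The one point that needs care is the optimization over $\lambda$: one wants $b - 1 < 0$ so that the first term is decreasing and $b > 0$ so that the second is increasing, i.e. exactly $0 < b \le \tfrac12$, and then the minimum is attained at an interior value of $\lambda$ with the asserted size (the $b$‑dependent constant stays bounded as $b \to 0^+$). So granting Lemma \ref{lem;ang-soin}, there is really no obstacle here — the whole argument is a few lines of scaling, which is what the word ``interpolation'' in the statement amounts to.

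If one prefers an argument not invoking Lemma \ref{lem;ang-soin}, one can work directly in the radial variable $r = |x|$. Normalizing $\|f\|_{L_x^2} = \|\nabla f\|_{L_x^2} = 1$ by the homogeneities above and reducing to smooth compactly supported $f$, the identity $|f(r)|^2 = -2\int_r^\infty {\rm Re}\,(\overline{f(s)}\,f'(s))\,ds$ combined with $1 \le s/r$ on $(r,\infty)$ and Cauchy--Schwarz in $s\,ds$ (and $\|g\|_{L_x^2}^2 = 2\pi\int_0^\infty |g(s)|^2 s\,ds$ for radial $g$) gives the endpoint Strauss bound $|f(r)| \lesssim r^{-1/2}$; this already disposes of $r \ge 1$, where $r^b|f(r)| \lesssim r^{b-1/2} \le 1$. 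For $r \le 1$ one integrates inward from the unit radius, $|f(r)| \le |f(1)| + \int_r^1 |f'(s)|\,ds \lesssim 1 + (\log(1/r))^{1/2}$ (Strauss for $|f(1)|$, Cauchy--Schwarz for the integral), and then $r^b|f(r)| \lesssim r^b\big(1 + (\log(1/r))^{1/2}\big) \lesssim 1$ because $r \mapsto r^b(\log(1/r))^{1/2}$ is bounded on $(0,1]$ precisely when $b > 0$. In this self‑contained route the main obstacle is exactly the small‑radius regime: the Strauss estimate is sharp at $b = \tfrac12$ and too lossy as $r \to 0$, so it has to be improved by integrating inward from the scaling‑natural radius, at the cost of a logarithmic factor that is harmless only thanks to the positive power $|x|^b$.
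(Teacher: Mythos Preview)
Your proof is correct. The paper itself does not prove this lemma: it simply quotes it as Proposition~3 of \cite{soin2} and remarks in one line that it ``holds from an interpolation.'' Your first argument---reducing Lemma~\ref{lem;ang-soin} to $\||x|^b f\|_{L_x^\infty} \lesssim \|f\|_{L_x^2} + \|\nabla f\|_{L_x^2}$ on radial functions and then optimizing over the dilation $f(\lambda\,\cdot)$---is precisely the scaling realization of that interpolation remark, and the 2D homogeneities you compute ($\lambda^{-b}$, $\lambda^{-1}$, $\lambda^{0}$) are correct, yielding the stated exponents $b$ and $1-b$ after optimizing $\lambda$. The alternative self-contained route via the radial Strauss bound for $r\ge 1$ and the inward-integration estimate with the harmless logarithmic loss for $r\le 1$ is also correct and makes the role of $b>0$ transparent; this is more than the paper provides, since it does not depend on Lemma~\ref{lem;ang-soin} or on the cited reference.
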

At first we consider the case $b_1 > 0$. Using Lemma \ref{radial-soin} and Gagliardo-Nirenberg's inequality, we have
\begin{align*}
\int K_1|u|^4\,dx &\le \||x|^\frac{b_1}2u\|_{L_x^\infty}^2\|u\|_{L_x^2}^2 \lesssim \|u\|_{L_x^2}^{b_1}\|\nabla u\|_{L_x^2}^{2-b_1}\|u\|_{L_x^2}^2\\
&\lesssim \|u\|_{L_x^2}^{2+b_1}\|\nabla u\|_{L_x^2}^{2-b_1} \le m(\varphi)^{1 + \frac{b_1}2} \|\nabla u\|_{L_x^2}^{2-b_1},
\end{align*}
and
\begin{align*}
\int K_2|u|^6\,dx & \lesssim \left\{\begin{array}{l}\||x|^\frac{b_2}4u\|_{L_x^\infty}^4\|u\|_{L_x^2}^2 \le m(\varphi)^{1 + \frac{b_2}2} \|\nabla u\|_{L_x^2}^{4-b_1}, \;\;\mbox{if}\;\;b_2 > 0,\\
m(\varphi) \|\nabla u\|_{L_x^2}^{4},\;\;\mbox{if}\;\; b_2 = 0.\end{array} \right.
\end{align*}
Then Young's inequality gives us that for some $\delta > 0$
\begin{align*}
E(\varphi) &\ge \frac12\|\nabla u\|_{L_x^2}^2 - \frac14\int |K_1||u|^4\,dx - \frac16\int|K_2||u|^6\,dx \\
&\ge \frac14\|\nabla u\|_{L_x^2}^2 - Cm(\varphi)^\frac{2 + b_1}{b_1} -  m(\varphi)^{1+ \frac{b_2}2}\|\nabla u\|_{L_x^2}^{4-b_1}.
\end{align*}
Therefore we get the uniform bound of $\|\nabla u\|_{L_x^2}^2$ by continuity argument, provided $\|\nabla \varphi\|_{L_x^2}$ is sufficiently small.

If $b_1 = 0$, then since $\int |K_1||u|^2\,dx \lesssim \|u\|_{L_x^2}^2\|\nabla u\|_{L_x^2}^2$, we need smallness of $\|\varphi\|_{L_x^2}$.

\section{Small data scattering}
In order to show the scattering we use the extended Strichartz estimates of Lemma \ref{lem;stri-extended}.
To begin with we introduce a Besov type function space $B_{\theta, r, \widetilde r}^s$ for $s \in \mathbb R$ and $1 \le r, \widetilde r \le \infty$, which is defined by
$$
\left\{f \in L_\rho^rL_\theta^{\widetilde r} : \|f\|_{B_{\theta}, r, \widetilde r }^s := \left(\sum_{N \ge 1} N^{2s}\|P_Nf\|_{L_\rho^rL_\theta^{\widetilde r}}^2\right)^\frac12 < \infty \right\},
$$
where $P_N$ is the frequency projection operator for dyadic numbers $N \ge 1$ such that $\sum_{N \ge 1}P_N = 1$, $\widehat{P_1 f} = \beta_1\widehat f$, and $\widehat{P_Nf} = \beta(\frac{\cdot}{N})\widehat f$ for usual Littlewood-Paley functions $\beta_1 \in C_0^\infty(B(0, 1))$ and $\beta \in C_0^\infty (\frac12 < |\xi| < 2)$.
Then we can easily get the following:
\begin{enumerate}
\item $B_{\theta, r, \widetilde r}^s$ is a Banach space whose norm is $\|\cdot\|_{B_{\theta, r, \widetilde r} }^s$.
\item Its dual is $B_{\theta, r', \widetilde r'}^{-s}$.
\item $\|f\|_{B_{\theta, r, \widetilde r}^{1+s}} \sim \|f\|_{B_{\theta, r, \widetilde r}^{s}} + \|\nabla f\|_{B_{\theta, r, \widetilde r}^{s}}$ for $1 < r, \widetilde r < \infty$.
\item It has natural real and complex interpolation structure.
\end{enumerate}
For these see \cite{tr, chonak}.

Now let us define a set $\triangle$ of extended Strichartz pairs $(q, r)$  by
$$
\triangle := \{(\infty, 2)\} \cup \left\{(q, r) : \frac12 - \frac1r < \frac1q < \frac32(\frac12 - \frac1r), 2 \le q < \infty < \infty, 2 < r < \infty\right\}.
$$
For any pair $(q, r) \in \triangle$ set $s(q,r) = 2(\frac1q+\frac1r - \frac12)$. Then from Lemma \ref{lem;stri-extended}, Littlewood-Paley theory, and complex interpolation it follows that
\begin{align}\label{str-besov1}
\|e^{it\Delta}\varphi\|_{L_t^qB_{\theta, r, 2}^{s(q, r)}} \lesssim \|\varphi\|_{L_x^2}.
\end{align}
It can be shown by Christ-Kiselev lemma (for instance see \cite{chrkis, ahch}) and duality argument that for any $(q, r), (\widetilde q, \widetilde r) \in \triangle$ with $\widetilde q' < q$
\begin{align}\label{str-besov2}
\left\|\int_0^t e^{i(t-t')\Delta}F(t')\,dt'\right\|_{L_t^qB_{\theta, r, 2}^{s(q, r)}} \lesssim \|F\|_{L_t^{\widetilde q'}B_{\theta, \widetilde r', 2}^{-s(\widetilde q, \widetilde r)}},
\end{align}
and also that for any $(q, r) \in \triangle$ and any admissible pair $(\widetilde q, \widetilde r)$ with $\widetilde q' < q$
\begin{align}\label{str-besov3}
\left\|\int_0^t e^{i(t-t')\Delta}F(t')\,dt'\right\|_{L_t^qB_{\theta, r, 2}^{s(q, r)}} \lesssim \|F\|_{L_t^{\widetilde q'}L_x^{\widetilde r'}}.
\end{align}
In this paper we only use \eqref{str-besov1} and \eqref{str-besov3}.

Let us choose $6 < r$ such that $\max(b_1, b_2 - 1) \le \frac2r$. Then we define the complete metric space $\xl(\delta)$ with metric $d$ by
\begin{align*}
X_{\theta}\left(\delta \right) := \{ u\in ( C\cap L_t^{\infty})(\mathbb{R};H_{\theta}^{1,1} ) : \|u\|_{X_\theta} &:= \|u \|_{L_t^{\infty}H_{\theta}^{1,1}} + \|u \|_{L_t^{4}H_{\theta, 4}^{1,1}} + \|u \|_{L_t^{3}H_{\theta, 6}^{1,1}} + \|(u, \partial_\theta u)\|_{L_t^{2}B_{\theta, r, 2}^{1+s(2, r)}} \leq \delta \},\\
 d(u,v) &:= \|u-v \|_{X_\theta}.
\end{align*}

Now we show that the nonlinear functional $\mathcal H(u) = e^{it\Delta}\varphi + N(u)$ is a contraction on $\xl(\delta)$.
For this we have only to show
\begin{align}
&\| N(u)\|_{\xl} \lesssim \|u\|_{\xl}^3 + \|u\|_{\xl}^5,\label{contract1}\\
&\| N(u) - N(u)\|_{\xl} \lesssim [(\|u\|_{\xl} + \|v\|_{\xl})^2 + (\|u\|_{\xl} + \|v\|_{\xl})^4]\|u-v\|_{\xl}.\label{contract2}
\end{align}
Indeed, $\|e^{it\Delta} \varphi\|_{\xl} \lesssim \|\varphi\|_{H_\theta^{1, 1}}$ by the extended Strichartz estimate \eqref{str-besov1}. From \eqref{contract1} and \eqref{contract2} we can find $\delta$ small enough for $\mathcal H$ to be a contraction mapping on $\xl(\delta)$, and for the equation (\ref{eq:sch}) to be globally well-posed in $H_\theta^{1, 1}$.

Since the nonlinear terms are algebraic (cubic and quintic), \eqref{contract2} follows from \eqref{contract1} straightforwardly. So, we consider only \eqref{contract1}. We utilize Lemmas \ref{lem;soin} -- \ref{lem;cm}, and Strichartz estimate \eqref{str-besov3}.

Let us invoke $N_l^{1,1}$ in Section \ref{sec;lwp}. Then we further decompose them into two parts, inside and outside of the unit ball.
$$
N_l^{1, 1} = \sum_{l, k = 1, 2}\int_0^te^{i(t-t')\Delta} [\psi_k\partial\partial_\theta Q_l]\,dt',
$$
where $\psi_1 \in C_0^\infty(B(0 , 1))$ and $\psi_2 = 1 - \psi_1$.
Given $\delta$, taking $(\widetilde q, \widetilde r) = (4, 4)$ or $(\infty, 2)$, we have that for any $u\in \xl(\delta)$
\begin{align*}
\|N_{l}^{1,1}\|_{L_{t}^{\infty}L_{x}^{2} \cap L_t^4L_x^4 \cap L_t^3L_x^6 \cap L_t^2B_{\theta, r, 2}^{s(2, r)}} &\lesssim \sum_{j = 1}^5 Q_{l, j}^k, \;\;l, k = 1, 2,
\end{align*}
where
\begin{align*}
Q_{l, 1}^1 &= \|\psi_1|x|^{b_l-1}|u|^{2l+1}\|_{L_t^\frac43L_x^\frac43},\qquad Q_{l, 2}^1 = \|\psi_1 |x|^{b_l}|u|^{2l}|\partial u|\|_{L_t^\frac43L_x^\frac43}, \qquad Q_{l, 3}^1 = \|\psi_1|x|^{b_l-1}|u|^{2l}|\partial_\theta u|\|_{L_t^\frac43L_x^\frac43},\\
Q_{l, 4}^1 &= \|\psi_1|x|^{b_l}|u|^{2l-1}|\partial u||\partial_\theta u|\|_{L_t^\frac43L_x^\frac43}, \qquad Q_{l, 5}^1 = \|\psi_1|x|^{b_l}|u|^{2l}|\partial \partial_\theta u |\|_{L_t^\frac43L_x^\frac43},
\end{align*}
and
\begin{align*}
Q_{l, 1}^2 &= \|\psi_2|x|^{b_l-1}|u|^{2l+1}\|_{L_t^\frac43L_x^\frac43},\qquad Q_{l, 2}^2 = \|\psi_2 |x|^{b_l}|u|^{2l}|\partial u|\|_{L_t^1L_x^2}, \qquad Q_{l, 3}^2 = \|\psi_2|x|^{b_l-1}|u|^{2l}|\partial_\theta u|\|_{L_t^\frac43 L_x^\frac43},\\
Q_{l, 4}^2 &= \|\psi_2|x|^{b_l}|u|^{2l-1}|\partial u||\partial_\theta u|\|_{L_t^1L_x^2}, \qquad Q_{l, 5}^2 = \|\psi_2|x|^{b_l}|u|^{2l}|\partial \partial_\theta u |\|_{L_t^1L_x^2}.
\end{align*}

By H\"older's and Hardy-Sobolev's inequalities, and Lemma \ref{lem;ang-soin} we estimate $Q_{l, j}^1$ with $0 < \varepsilon < \min(\frac12, 1 - (b_2-[b_2]))$ as follows:
\begin{align*}
Q_{1, 1}^1 + Q_{2, 1}^1 &\lesssim \||x|^{-\frac{1-b_1}3}u\|_{L_t^4 L_x^4}^3 + \|\psi_1|x|^{\varepsilon}|u|^2\|_{L_t^\infty L_x^\infty}\||x|^{-\frac{1-(b_2-[b_2]+\varepsilon)}3}u\|_{L_{t}^4L_x^4}^3\\
& \lesssim \|u\|_{L_t^4H_{4}^1}^3 + \|u\|_{L_t^\infty H_\theta^{1, 1}}^{2}\|u\|_{L_t^4H_4^1}^3 \lesssim \delta^3 + \delta^5,\qquad\qquad\qquad\qquad\qquad\qquad\qquad\qquad\quad\;\;\;
\end{align*}
\begin{align*}
Q_{1, 2}^1 + Q_{2, 2}^1 &\lesssim \|u\|_{L_t^4L_x^4}^3 + \||x|^{\varepsilon}u\|_{L_t^\infty L_x^\infty}^{2}\||x|^{-\varepsilon}u\|_{L_t^4L_x^4}^2\|\partial u\|_{L_t^4L_x^4} \\
&\lesssim \|u\|_{L_t^4H_4^1}^3 + \|u\|_{L_t^\infty H_\theta^{1,1}}^2\|u\|_{L_t^4H_4^1}^3 \lesssim \delta^3 + \delta^5,\qquad\qquad\qquad\qquad\qquad\qquad\qquad\qquad\quad\quad
\end{align*}
\begin{align*}
Q_{1, 3}^1 + Q_{2, 3}^1 &\lesssim \||x|^{-\frac{1-b_1}3}u\|_{L_t^4 L_x^4}^2\||x|^{-\frac{1-b_1}3}\partial_\theta u\|_{L_t^4 L_x^4}\\
&\qquad + \|\psi_1|x|^{\varepsilon}|u|^2\|_{L_t^\infty L_x^\infty}\||x|^{-\frac{1-(b_2-[b_2]+\varepsilon)}3}u\|_{L_{t}^4L_x^4}^2\||x|^{-\frac{1-(b_2-[b_2]+\varepsilon)}3}\partial_\theta u\|_{L_{t}^4L_x^4}\\
& \lesssim \|u\|_{L_t^4H_{4}^1}\|u\|_{L_t^4H_{\theta, 4}^{1,1}}^2 + \|u\|_{L_t^\infty H_\theta^{1, 1}}^{2}\|u\|_{L_t^4H_{\theta, 4}^{1,1}}^3 \lesssim \delta^3 + \delta^5,\qquad\qquad\qquad\qquad\qquad\qquad\quad
\end{align*}
\begin{align*}
Q_{1, 4}^1 + Q_{2, 4}^1 &\lesssim \|u\|_{L_t^4L_x^4}\|\partial u\|_{L_t^4L_x^4}\|\partial_\theta u\|_{L_t^4L_x^4} + \||x|^{2\varepsilon}|u|^2\|_{L_t^\infty L_x^\infty}\||x|^{-2\varepsilon}u\|_{L_t^4L_x^4}\|\partial u\|_{L_t^4L_x^4}\|\partial_\theta u\|_{L_t^4L_x^4}\\
& \lesssim \|u\|_{L_t^4H_{\theta, 4}^{1,1}}^3 + \|u\|_{L_t^\infty H_\theta^{1,1}}^2\|u\|_{L_t^4H_{\theta, 4}^{1,1}}^3 \lesssim \delta^3 + \delta^5,
\end{align*}
\begin{align*}
Q_{1, 5}^1 + Q_{2, 5}^1 &\lesssim \|u\|_{L_t^4L_x^4}^2\|\partial\partial_\theta u\|_{L_t^4L_x^4} + \||x|^{2\varepsilon}|u|^2\|_{L_t^\infty L_x^\infty}\||x|^{-\varepsilon}u\|_{L_t^4L_x^4}^2\|\partial\partial_\theta u\|_{L_t^4L_x^4}\\
& \lesssim \|u\|_{L_t^4H_{\theta, 4}^{1,1}}^3 + \|u\|_{L_t^\infty H_\theta^{1, 1}}^{2}\|u\|_{L_t^4H_4^1}^2\|u\|_{L_t^4H_{\theta, 4}^{1, 1}} \lesssim \delta^3 + \delta^5.\qquad\qquad\qquad\qquad\qquad\qquad\;\;
\end{align*}
Here $[b_2]$ denotes the maximal integer less than or equal to $b_2$.

Next we estimate $Q_{l, j}^2$. We first have
\begin{align*}
Q_{1, 1}^2\le \||x|^{-\frac{1-b_1}3}u\|_{L_t^4L_x^4}^3  \lesssim \|u\|_{L_t^4H_4^1}^3 \lesssim \rho^3.
\end{align*}
If $[b_2] = 0$, then by choosing a small $\varepsilon < \frac14$ we have
\begin{align*}
Q_{2, 1}^2 \le \||x|^{2\varepsilon}|u|^2\|_{L_t^\infty L_x^\infty}\||x|^{-\frac{1+2\varepsilon}3}u\|_{L_t^4L_x^4}^3  \lesssim \|u\|_{L_t^\infty H_\theta^{1,1}}^2\|u\|_{L_t^4H_4^1}^3 \lesssim \rho^5.
\end{align*}
If $[b_2] = 1$, then
\begin{align*}
Q_{2, 1}^2 \le \||x||u|^2\|_{L_t^\infty L_x^\infty}\||x|^{-\frac{1-(b_2 - [b_2])}3}u\|_{L_t^4L_x^4}^3  \lesssim \|u\|_{L_t^\infty H_\theta^{1,1}}^2\|u\|_{L_t^4H_4^1}^3 \lesssim \rho^5.
\end{align*}
As for $Q_{l, 2}^2$ we estimate:
If $b_1 = 0$, then
\begin{align*}
Q_{1, 2}^2\le \|u\|_{L_t^3 L_x^6}^2\|u\|_{L_t^3H_6^1}  \lesssim \rho^3.
\end{align*}
If $0 < b_1 < \frac13$, then since $b_1 \le  \frac2r$, using Corollary \ref{decay}, we obtain
\begin{align}\begin{aligned}\label{q122}
Q_{1, 2}^2 &\le \||x|^{\frac2r}u\|_{L_t^\infty L_\rho^\frac{2r}{r-4} L_\theta^\infty}\|u\|_{L_t^2L_\rho^rL_\theta^2}\|\partial u\|_{L_t^2L_\rho^rL_\theta^\infty} \lesssim  \|u\|_{L_t^\infty H_\theta^{1,1}}\|u\|_{L_t^2L_\rho^rL_\theta^2}\|(\partial u, \partial_\theta\partial u)\|_{L_t^2L_\rho^rL_\theta^2}\\
&\lesssim \|u\|_{L_t^\infty H_\theta^{1,1}}\|(u, \partial_\theta u)\|_{L_t^2B_{\theta, r, 2}^{1+s(2, r)}}^2 \lesssim \rho^3.
\end{aligned}\end{align}
Here we used the fact $\|\partial_\theta \partial f\|_{L_\rho^rL_\theta^2} \lesssim \|\nabla f\|_{L_\rho^rL_\theta^2} + \|\partial \partial_\theta f\|_{L_\rho^2L_\theta^2}$.

If $[b_2] = 0$, then we choose a small $\varepsilon$ such that $\varepsilon < \frac13$ and $b_2 +2\varepsilon < 1$ and get
\begin{align*}
Q_{2,2}^2 &\le \||x|^\frac{b_2 + 2\varepsilon}2u\|_{L_t^\infty L_x^\infty}^2\||x|^{-\varepsilon}u\|_{L_t^3L_x^6}^2\|\partial u\|_{L_t^3L_x^6} \lesssim \|u\|_{L_t^\infty H_\theta^{1,1}}^2\|u\|_{L_t^3H_6^1}^3 \lesssim \rho^5.
\end{align*}
If $[b_2] = 1$, then by replacing $b_1$ in \eqref{q122} with $b_2-1$ we have
\begin{align*}
Q_{2,2}^2 &\le \||x|^\frac12u\|_{L_t^\infty L_x^\infty}^2\|\psi_2|x|^{b_2-1}|u|^2|\partial u|\|_{L_t^1L_x^2}
 \lesssim \|u\|_{L_t^\infty H_\theta^{1,1}}^3\|(u, \partial_\theta u)\|_{L_t^2B_{\theta, r, 2}^{1+s(2, r)}}^2 \lesssim \rho^5.
\end{align*}

Then we estimate $Q_{l, 3}^2$ and $Q_{l, 4}^2$ by replacing a $u$ in $Q_{l, 1}^2$ and $Q_{l, 2}^2$ with $\partial_\theta u$, respectively and obtain that
$$
\sum_{l = 1, 2}(Q_{l, 3}^2 + Q_{l, 4}^2) \lesssim \rho^3 + \rho^5.
$$
The estimate for $Q_{l, 5}^2$ can be done by replacing $\partial u$ of $Q_{l, 2}^2$ with $\partial \partial_\theta u$.
These conclude the estimate \eqref{contract1} and hence \eqref{contract2}.

The small data scattering is now straightforward from the global well-posedness. In fact, let us define a scattering state $u_\pm$ with
$$
\varphi_\pm := \varphi + \lim_{t\to \pm\infty} e^{-it\Delta}N(u).
$$
The existence of limit is guaranteed by the global well-posedness. Then we get the desired result by the duality argument based on the nonlinear estimates for $Q_{l, j}^k$:
\begin{align*}
\|u(t) - u_\pm(t)\|_{H_\theta^{1,1}} &= \|(u(t) - u_\pm(t), \partial_\theta(u(t) - u_\pm(t))\|_{H^1} \\
&= \sup_{\|\psi\|_{L^2} \le 1}\left|\int_t^{\pm \infty}\langle (1-\Delta)^\frac12(N(u), \partial_\theta N(u)), e^{-it'\Delta}\psi \rangle \,dt'\right|\\
& \lesssim (\delta^{2} + \delta^4)\|(u, \partial_\theta u)\|_{L^2((t, \pm\infty); B_{\theta, r, 2}^{1+s(2, r)}) \cap  L^3((t, \pm\infty); H_6^1) \cap L^4((t, \pm\infty); H_{4}^{1})} \to 0\quad \mbox{as}\quad t \to \pm \infty.
\end{align*}
Here $(t, \pm\infty)$ means that $(t, +\infty)$ if $t > 0$ and $(-\infty, t)$ if $t < 0$.
This completes the proof of Theorem \ref{thm;scattering}.

\section{Non-existence of scattering}
We follow the argument as in \cite{barab, 3d, choz}. By contradiction we assume that $\|\varphi_+\|_{L_x^2} \neq 0$. Since $K_l$ are real-valued, $m(u(t)) = m(\varphi)$.
We consider $H(t) = -{\rm Im}\int u(t)\overline{u_+(t)}\,dx$ for $t \gg 1$. Differentiating $H$, we get
$$
\frac{d}{dt}H(t) = {\rm Re}\int (K_1Q_1 + K_2Q_2) \overline{u_+}\,dx.
$$
We decompose this as follows:
$$
\frac{d}{dt}H(t) = \sum_{j = 1}^3J_1^j +  J_2,
$$
where
\begin{align*}
J_1^1 &= \int |x|^{b_1}|u_+|^4\,dx,\\
J_1^2 &= \int |x|^{b_1}(|u|^2-|u_+|^2)|u_+|^2\,dx,\\
J_1^3 &= {\rm Re}\int |x|^{b_1}|u|^2(u-u_+) \overline{u_+}\,dx,
\end{align*}
and
\begin{align*}
J_2 = {\rm Re}\int |x|^{b_2}|u|^4uu_+\,dx.
\end{align*}

We estimate $J_1^1$ as follows: for $0 < \delta \ll 1 \ll k$
$$
\int_{\delta t \le |x| \le kt}|u_+|^2\,dx \le \||x|^{-\frac{b_1}2}\|_{L_x^2(\delta t \le |x| \le kt)}(J_1^1)^\frac12
\lesssim t^{1-\frac{b_1}2}(J_1^1)^\frac12.
$$

It was show in \cite{barab, choz} that $\int_{\delta t \le |x| \le kt}|u_+|^2\,dx \sim \|\varphi_+\|_{L^2}^2$ for some fixed large $k$ and small $\delta$, and for any large $t$.
From this we deduce that $$J_1^1 \gtrsim_{m(\varphi_+)} t^{-(2-b_1)}.$$

Let us denote the generator of Galilean transformation by $\mathbf J$, that is $\mathbf J = e^{-it\Delta}x e^{it\Delta}$. On the sufficiently regular function space
\begin{align}\label{j}
\mathbf J = x + 2it\nabla, \quad (\mathbf J \cdot \mathbf J)^m = (|x|^2 - 4tA - 4t^2\Delta)^m,
\end{align}
where $A$ is the self-adjoint dilation operator defined by $\frac1{2i}(x\cdot \nabla + \nabla \cdot x)$, which  yields $\mathcal A = \int \overline{u}Au\,dx$.
Since $\|u_+(t)\|_{L_x^\infty} \lesssim t^{-1}\|\varphi_+\|_{L_x^1}$,
and
$$
\||x|^{2m} u_+(t)\|_{L_x^\infty} = \|e^{it\Delta}(\mathbf J \cdot \mathbf J)^{m}\varphi_+\|_{L_x^\infty} \lesssim_{\varphi_+}  t^{-(1-2m)},
$$
by interpolation we see that
\begin{align}\label{t-decay}
\||x|^\theta u_+(t)\|_{L_x^\infty} \lesssim_{\varphi_+} t^{-(1-\theta)}
\end{align}
for any $\theta > 0$.
By this we get $\||x|^{-\frac{b_1}3}u_+(t)\|_{L_x^\frac32(\delta t \le |x| \le kt)}(J_1^2)^\frac13  \lesssim t^{\frac43-\frac{b_1}3}(J_1^2)^\frac13$

For $J_1^2$ we have
\begin{align*}
J_1^2 &\lesssim \||x|^{b_1}|u_+(t)|^2\|_{L_x^\infty}(\|u\|_{L_x^2} + \|u_+\|_{L_x^2})\|u - u_+\|_{L_x^2}.
\end{align*}
Using \eqref{t-decay} we get $$|J_1^2| = o_{m(\varphi),\; \varphi_+}(t^{-(2-b_1)}).$$

To estimate $J_1^3$ and $J_2$ we need the following lemma.
\begin{lem}\label{pot-decay}
	Let $u$ be a global smooth solution of \eqref{eq:sch} with $K_1 = |x|^{b_1}, K_2 = |x|^{b_2}$ such that $b_1 > 0$ and $0 \le b_2 \le 2 +b_1$. If $xu \in C(\mathbb R; L_x^2)$, then for any large $t$ there holds
	$$
	V(u) := \frac14\int |x|^{b_1}|u|^4\,dx + \frac16\int |x|^{b_2}|u|^6\,dx \le C(m(\varphi), E(\varphi), \||x|\varphi\|_{L_x^2})\; t^{-(2-b_1)}.
	$$
\end{lem}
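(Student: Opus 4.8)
The plan is to run a pseudo-conformal (conformal-energy) argument built on the virial identities of Lemma~\ref{loc-virial}. Since $K_l=|x|^{b_l}$ are real-valued the energy $E(u(t))=\frac12\|\nabla u(t)\|_{L_x^2}^2+V(u(t))$ is conserved, and one has the homogeneity relations $x\cdot\nabla K_l=b_lK_l$, hence $2K_2-x\cdot\nabla K_2=(2-b_2)K_2$. I would introduce the conformal energy
$$
\Sigma(t):=\|(x+2it\nabla)u(t)\|_{L_x^2}^2+8t^2V(u(t)).
$$
Expanding the square gives $\|(x+2it\nabla)u\|_{L_x^2}^2=\|xu\|_{L_x^2}^2+4t^2\|\nabla u\|_{L_x^2}^2-4t\mathcal A(t)$, and combining this with $8t^2V(u)=8t^2E(\varphi)-4t^2\|\nabla u\|_{L_x^2}^2$ (energy conservation) gives the equivalent form
$$
\Sigma(t)=\|xu(t)\|_{L_x^2}^2-4t\mathcal A(t)+8t^2E(\varphi).
$$
As $K_l\ge0$ forces $V(u)\ge0$, we have $\Sigma(t)\ge8t^2V(u(t))\ge0$, so it is enough to prove $\Sigma(t)\lesssim t^{b_1}$ for $t\ge1$.

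To differentiate, put $g(t)=\|xu(t)\|_{L_x^2}^2$; by \eqref{dilation} one has $g'(t)=4\mathcal A(t)$, so the second form reads $\Sigma=g-tg'+8t^2E(\varphi)$ and therefore $\Sigma'(t)=-t\,g''(t)+16tE(\varphi)$. Differentiating \eqref{dilation} once more and inserting \eqref{virial} with the homogeneity relations (and $E(u(t))=E(\varphi)$) gives
$$
g''(t)=4\frac{d}{dt}\mathcal A(t)=16E(\varphi)-2b_1\!\int|x|^{b_1}|u|^4\,dx+\frac43(2-b_2)\!\int|x|^{b_2}|u|^6\,dx,
$$
so the $16E(\varphi)$ term cancels and we arrive at
$$
\Sigma'(t)=2b_1\,t\!\int|x|^{b_1}|u|^4\,dx-\frac43(2-b_2)\,t\!\int|x|^{b_2}|u|^6\,dx.
$$
(These differentiations are legitimate once $u\in C(\mathbb R;H_\theta^{2,1})$ with $xu\in C(\mathbb R;L_x^2)$; one proves the identity first for $\varphi\in H_\theta^{2,1}$ and then passes to the stated hypotheses by continuous dependence, exactly as in the proof of Lemma~\ref{loc-virial}.)

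The Gronwall step is where the hypothesis on $b_2$ enters. Both integrals above are nonnegative, and the assumption $0\le b_2\le2+b_1$ is precisely the inequality $\frac43(2-b_2)\ge-\frac43b_1$, so that $-\frac43(2-b_2)\,t\int|x|^{b_2}|u|^6\,dx\le\frac43b_1\,t\int|x|^{b_2}|u|^6\,dx$. Hence
$$
\Sigma'(t)\le b_1\,t\Big(2\!\int|x|^{b_1}|u|^4\,dx+\frac43\!\int|x|^{b_2}|u|^6\,dx\Big)=8b_1\,t\,V(u(t))\le\frac{b_1}{t}\,\Sigma(t),\qquad t\ge1,
$$
the last step by $\Sigma(t)\ge8t^2V(u(t))$. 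A Gronwall argument on $[1,t]$ then yields $\Sigma(t)\le\Sigma(1)\,t^{b_1}$, and $\Sigma(1)$ is finite and bounded by a constant depending only on $m(\varphi)$, $E(\varphi)$ and $\||x|\varphi\|_{L_x^2}$: indeed $0\le V(u(1))\le E(\varphi)$ and $\|\nabla u(1)\|_{L_x^2}^2\le2E(\varphi)$ by conservation, while $\|xu(1)\|_{L_x^2}$ is controlled by feeding $|\mathcal A(s)|\le\|xu(s)\|_{L_x^2}\|\nabla u(s)\|_{L_x^2}\le\sqrt{2E(\varphi)}\,\|xu(s)\|_{L_x^2}$ into \eqref{dilation} and a scalar Gronwall (equivalently, via the a priori bound on $\|xu(t)\|_{L_x^2}^2$ from the blowup argument, whose rigidity hypothesis \eqref{rigid} holds for $K_l=|x|^{b_l}$ with $\alpha=\max(0,2-b_2)$). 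Dividing $\Sigma(t)\le\Sigma(1)t^{b_1}$ by $8t^2$ gives $V(u(t))\le Ct^{-(2-b_1)}$ for $t\ge1$, which is the assertion.

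The only genuinely delicate point — and what I expect to be the crux — is the sign of the quintic contribution $-\frac43(2-b_2)\,t\int|x|^{b_2}|u|^6\,dx$ to $\Sigma'$. For $b_2\le2$ it is nonpositive and costs nothing, but for $2<b_2\le2+b_1$ it is positive, and estimating it crudely (say by $V(u)$ alone, discarding the cubic term) would only yield $\Sigma(t)\lesssim t^{b_1+b_2-2}$ and hence the weaker decay $V(u)\lesssim t^{b_1+b_2-4}$. The resolution — and the reason the hypothesis reads $b_2\le2+b_1$ — is to absorb this term into the cubic term so that the two nonlinear integrals recombine exactly into $8V(u)$; that recombination is what makes the Gronwall inequality close with the sharp exponent $b_1$.
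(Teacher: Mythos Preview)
Your argument is correct and is essentially the same as the paper's: you introduce the pseudo-conformal quantity $\Sigma(t)=\|\mathbf J u\|_{L_x^2}^2+8t^2V(u)$, compute $\Sigma'(t)=2b_1t\int|x|^{b_1}|u|^4-\tfrac43(2-b_2)t\int|x|^{b_2}|u|^6$, use $b_2\le 2+b_1$ to bound this by $8b_1tV(u)$, and close via Gronwall. The only cosmetic difference is that the paper integrates the differential inequality over $[0,t]$ and applies an integral Gronwall to $t^2V(u(t))$ (splitting $\int_0^1+\int_1^t$ and using $V(u)\le E(\varphi)$ on $[0,1]$), whereas you apply the differential form of Gronwall to $\Sigma$ on $[1,t]$ and then separately bound $\Sigma(1)$; both routes yield the same constant and exponent.
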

From Lemma \ref{pot-decay} and inequality \eqref{t-decay} it follows that
$$
|J_1^3| \le \left(\int|x|^{b_1}|u|^4\,dx\right)^\frac12\|u - u_+\|_{L_x^2}\||x|^\frac{b_1}2u_+\|_{L_x^\infty} = o_{\varphi, \; \varphi_+}(t^{-(2-b_1)}),
$$
\begin{align*}
|J_2| &\le  \int |x|^{\frac{5b_2}6}|u|^5|x|^\frac{b_2}6|u_+|\,dx = (\int |x|^{b_2}|u|^6\,dx)^\frac56\||x|^\frac{b_2}4|u_+|\|_{L_x^\infty}^\frac23\|u_+\|_{L_x^2}^\frac13 \lesssim_{\varphi,\; \varphi_+}t^{-(\frac73 - \frac{5b_1}6-\frac{b_2}6)}\\
& =  o_{\varphi, \;\varphi_+}(t^{-(2-b_1)}) \quad(\because b_2 < 2+b_1).
\end{align*}
Therefore we conclude that for $t \gg 1$
$$
\frac{d}{dt}H(t) \gtrsim_{\varphi, \varphi_+} t^{-(2-b_1)}.
$$
Since $H(t)$ is uniformly bounded for any $t \ge 0$, the range $b_1 \ge 1$ leads us to the contradiction to the assumption $\|\varphi_+\|_{L_x^2} \neq 0$. By time symmetry a similar argument holds for negative time. We omit that part.

\begin{proof}[Proof of Lemma \ref{pot-decay}]
Let us invoke \eqref{j}. Then since $xu \in C(\mathbb R; L_x^2)$, we have $\mathbf{J} = x+2it\nabla$. From this and Lemma \ref{loc-virial} we deduce the pseudo-conformal identity:
	\begin{align*}
	\frac{d}{dt}\int [|\mathbf J u|^2 + 8t^2V(u)]\,dx
	&= -4t \left[ -\frac12\int x\cdot \nabla K_1|u|^4\,dx + \frac13\int(2K_2-x\cdot \nabla K_2)|u|^6 \,dx\right].
\end{align*}
Since $0 \le b_2 \le 2+b_1$ we obtain
\begin{align*}
\frac{d}{dt}\int [|\mathbf J u|^2 + 8t^2V(u)]\,dx
	&\le -4t \left[-\frac{b_1}{2}\int |x|^{b_1}|u|^4\,dx + \frac{2-b_2}{3}\int|x|^{b_2}|u|^6 \,dx\right]\\
	&\le 8b_1tV(u(t)).
	\end{align*}
Integrating this over $[0, t]$, energy conservation gives us
	\begin{align*}
	t^2V(u(t)) &\le \frac18\||x|\varphi\|_{L_x^2}^2 + b_1\int_0^t \tau V(u(\tau))\,d\tau\\
	&\le \frac18\||x|\varphi\|_{L_x^2}^2 + b_1\int_0^1 \tau V(u(\tau))\,d\tau + b_1\int_1^t\tau V(u(\tau))\,d\tau\\
	&\le C(m(\varphi), E(\varphi), \||x|\varphi\|_{L_x^2})) + b_1\int_1^t\tau V(u(\tau))\,d\tau.
	\end{align*}
Then from Gronwall's type inequality it follows that
	$$
	t^2V(u(t)) \le C(m(\varphi), E(\varphi), \||x|\varphi\|_{L_x^2}))\exp\left[\int_1^t \frac{b_1}{\tau}\,d\tau\right] = C(m(\varphi), E(\varphi), \||x|\varphi\|_{L_x^2}))\;t^{b_1}.
	$$
	This completes the proof of Lemma \ref{pot-decay}.
\end{proof}

\section*{Acknowledgments}
The authors would like to thank the anonymous referees for their careful reading and valuable comments for this paper. This work was supported by NRF-2018R1D1A3B07047782 (Republic of Korea).

\end{document}